\newtheorem{thm}{Theorem}[section]
\newtheorem{cor}[thm]{Corollary}
\newtheorem{lem}[thm]{Lemma}
\theoremstyle{definition}
\newtheorem{defn}[thm]{Definition}
\newtheorem{rem}[thm]{Remark}
\newcounter{labelflag} \setcounter{labelflag}{0}
\newcommand{\Label}[1]{
                       \ifnum\thelabelflag=1
                          \ifmmode
                             \makebox[0in][l]{\qquad\fbox{\rm#1}}
                          \else
                             \marginpar{\vspace{0.7\baselineskip}
                                        \hspace{-1.1\textwidth}
                                        \fbox{\rm#1}}
                          \fi
                       \fi
                       \label{#1} }
\newcommand{\be}{\begin{equation}}
\newcommand{\ee}{\end{equation}}
\newcommand{\R}{\mathbb{R}}
\newcommand{\N}{\mathbb{N}}
\newcommand{\E}{\mathbb{E}}
\begin{document}

\baselineskip=1.3\baselineskip

\begin{titlepage}
\title{\large  \bf \baselineskip=1.3\baselineskip
Non-autonomous hybrid  stochastic  systems with delays \footnote{This work was supported
 by NSFC (11971394 and 11871049), Central Government Funds for Guiding Local Scientific and Technological Development (2021ZYD0010)
  and Fundamental Research Funds for the Central Universities (2682021ZTPY057).}  }
\vspace{10mm}

\author{{ Dingshi Li, Yusen Lin\footnote{Corresponding authors:
linyusen@my.swjtu.edu.cn (Y. Lin).}, Zhe Pu}
 \\{ \small\textsl{
 School of Mathematics, Southwest Jiaotong University, Chengdu, 610031, P. R. China}}}

\date{}
\end{titlepage}

\maketitle

{ \bf Abstract} \ \ \
The aim of this paper is to study   the dynamical behavior of
 non-autonomous  stochastic  hybrid  systems with delays. By general  Krylov-Bogolyubov's method,
we first  obtain the sufficient conditions for the   existence  of  an evolution system of measures
 of  the  non-autonomous stochastic system
 and also give some easily verifiable conditions.
 We then  prove a sufficient  condition for convergence  of evolution systems of measures
 as the delay approaches zero.  As an application
of the abstract theory, we first prove the existence  of evolution systems of measures
 for  stochastic system with time-vary delays, which
 comes from feedback
control problem based on discrete-time  state observations.
Furthermore, when observation interval  goes to zero, we show every  limit point of a   sequence of
evolution system of measures of the non-autonomous  stochastic   system
 must be a evolution system of measures of the limiting  system.

{\bf Keywords.}     Non-autonomous; Markovian switching; Delay;
Evolution system of  measures;    Limit measure.

\medskip

\section{Introduction}
\setcounter{equation}{0}

The existence   of invariant measures
  of stochastic equations was obtained   in \cite{CLW, WW1,WW2,W2019,LWW2021}.
Especially,  the limiting behavior of invariant  measures of stochastic delay  systems
 was studied in \cite{LWW2021} as delay approaches to zero.
 The existence of periodic measures  was also obtained in \cite{LWW2022,DD2008,LL2021}
for the equations with periodic time-dependent
forcings. Furthermore, in \cite{LWW2021} the authors studied
the limiting behavior of periodic measures of the stochastic equations with delays
 as delay goes to zero.
To extended the notation of periodic measures to cover the equations with aperiodic external force,
the concept of evolution system of measures was developed by \cite{DG2006,PR2007}.
 Recently, in \cite{WCT2022},  the limiting behavior of evolution system of measures
 of non-autonomous stochastic evolution  systems was studied by Wang et. al.

We mention that hybrid stochastic differential equations  (also known as
stochastic differential equations with Markovian switching) have many applications
in practice. For example, it has been used to model   systems where they may experience
abrupt changes in their structure.
For  hybrid stochastic differential equations  without  delays, the
existence  and asymptotic stability in distribution of invariant measures  was studied in
\cite{YM2003}. The asymptotic stability in distribution of invariant measures
 was also obtained in \cite{YZM2003,DDD2014,WWM2019} for
 the hybrid stochastic differential equations with constant delays.
In \cite{LLP2022}, the authors obtained the sufficient conditions of existence, stability and
 convergence of  evolution system of measures
 of non-autonomous hybrid stochastic evolution  systems.

This paper is concerned with the existence, stability  and convergence of evolution system of measures of
non-autonomous stochastic hybrid differential
equations with delays. By general  Krylov-Bogolyubov's method,
we first  obtain the sufficient conditions for the   existence  of  an evolution system of measures
 of  the  non-autonomous stochastic system
 and also give some easily verifiable conditions. For periodic Markov processes,
we show the existence of  periodic evolution systems of measures.
 We then  prove a sufficient  condition for convergence  of evolution systems of measures
 as the delay approaches zero.
As an application of our abstract results, we will investigate the existence,
asymptotic stability in distribution,
 and the limiting behavior
  of  evolution system of measures of  stochastic system with time-vary delays, which
 comes from feedback
control problem based on discrete-time  state observations. When a
given stochastic hybrid differential
equation is not stable, Mao \cite{M2013} discussed
how to design a feedback control based on discrete-time state
observations to stabilise the stochastic equation in the sense of the mean square
exponential stability. Such a stabilisation problem has since then
been studied by many authors, see, e.g.,\cite{LLLM2022,YLLMQ2015,MLHLL2014,S2017,FFMXY2019,YZ2019,YHLM2021}.

Throughout this paper,  we let $\R^n$
be the $n$-dimensional Euclidean space and $X$ be a Polish space with a metric $d$.
Let $\mathcal B(X)$ denote the family
of all Borel measurable sets in $X$. If $x\in \R^n$, then $|x|$ is
its Euclidean norm. If $A$ is a vector or matrix, its transpose is
denoted by $A^T$. Let $0<\rho\leq 1$ and $C_\rho:=C\left( {\left[ { - \rho ,0} \right],\R^n } \right)$ with the maximum norm
$\left\| \phi  \right\|_\rho   = \mathop {\sup }\limits_{ - \rho
\le s \le 0} | {\phi \left( s \right)} |,\,\,\phi  \in C_\rho.$
Let $W(t)=(W_1,\ldots,W_m)^T$ be an $m$-dimensional
standard two-side Wiener process on a complete filtered probability space $
( \Omega ,\mathcal F, \{\mathcal F_t  \}_{t\in \R} ,P )$  satisfying  the usual condition
and $r(t)$, $t\in \R$, be a right continuous irreducible Markov chain,  independent of
the Wiener process $W(t)$,  on the
probability space $
( \Omega ,\mathcal F, \{\mathcal F_t  \}_{t \in \R} ,P )$ taking values in a finite state space $S = \left\{
{1,2, \ldots ,N} \right\}$ with generator $\Gamma  = {\left(
{{r_{ij}}} \right)_{N \times N}}$ given by
\[P\left\{ {r\left( {t + \Delta } \right) = j\left| {r\left( t \right)
= i} \right.} \right\} = \left\{ {\begin{array}{*{20}{c}}
   {{r_{ij}}\Delta  + o\left( \Delta  \right),\;\;\;\ i \ne j;} \hfill  \\
   {1 + {r_{ij}}\Delta  + o\left( \Delta  \right),\;\;\;\ i = j,} \hfill  \\
\end{array}} \right.\]
where $\Delta  > 0$ and ${\lim _{\Delta  \to 0}}o\left( \Delta
\right)/\Delta  = 0, {r_{ij}} > 0$ is the transition rate from $i$
to $j$ if $i\ne j$ and ${r_{ii}} =  - \sum\nolimits_{i\ne j}
{{r_{ij}}} $. It
is well known that almost every sample path of $r(t)$ is a right-continuous step function
and $r(t)$ is ergodic.

In this paper, we study the long-term  behavior  of the following
nonautonomous stochastic  system with Markovian switching:
 \begin{align}\label{eu1}
\begin{split}
 du(t)& = f\left( {t,r\left( t \right),u\left( t \right),u\left( {t - \rho_0 \left( t \right)} \right)} \right)dt \\
 &\quad + g\left( {t,r\left( t \right),u\left( t \right),u\left( {t - \rho_0 \left( t \right)}
  \right)} \right)dW\left( t \right),\quad t > s, \\
 \end{split}
\end{align}
with initial data
\begin{equation}\label{eu2}
u \left( s+\tau\right) = \xi(\tau),\,\,-\rho\leq \tau\leq 0, \quad \text{and}\quad r(s)=j\in S,
\end{equation}
where $s\in \R$,  $u(t)\in \R^n$ is an unknown state,
$\rho_0: \R\rightarrow [0,\rho]$ is a Borel measurable  function,
 $\xi\in C_\rho$, and $f: \mathbb R\times S \times\mathbb R^n \times\mathbb R^n  \rightarrow \mathbb R^n$
 and $g: \mathbb R\times S \times\mathbb R^n \times\mathbb R^n  \rightarrow \mathbb R^{n\times m}$
 are  Borel measurable  functions satisfying the following assumption.

$(A_0)$ There is a pair of positive constants $L_f$ and $L_g$
such that for all  $t\in \R$, $j\in S$ and  $x_i,y_i\in \R^n$, $i=1,2$,
\[
\left| {f\left( {t,j,x_1 ,x_2 } \right) - f\left( {t,j,y_1 ,y_2 } \right)} \right| \le
L_f \left( {\left| {x_1  - x_2 } \right| + \left| {y_1  - y_2 } \right|} \right)
\]
and
\[
\left| {g\left( {t,j,x_1 ,x_2 } \right) - g\left( {t,j,y_1 ,y_2 } \right)} \right| \le
L_g \left( {\left| {x_1  - x_2 } \right| + \left| {y_1  - y_2 } \right|} \right),
\]

$(A_1)$ For any $j\in S$
\[
\mathop {\sup }\limits_{t \in \R} \left| {f\left( {t,j,0,0} \right)} \right| <  + \infty\quad
\text{and}\quad
\mathop {\sup }\limits_{t \in \R} \left| {g\left( {t,j,0,0} \right)} \right| <  + \infty.
\]

It is well know   (see, e.g., \cite{Mao}) that
under Condition  $(A_0)$ and $(A_1)$,
we can show  that
  for any $\xi  \in C_\rho$ and $r(s)=j\in S$,
  system \eqref{eu1}-\eqref{eu1} has a unique solution, which is
  written as   $u(t)$. To highlight the
initial values, we let $r_{s,j}(t)$ be the Markov chain starting from state $i\in S$ at $t =s $ and
denote by $u(t,s,\xi,j)$ the solution of Eq. \eqref{eu1}-\eqref{eu2}
with initial conditions $u (s,s,\xi,j) = \xi $ and
$r(s) = j$. Moreover, for  any bounded  subset $B$ of $\R^n$,
\[
\mathop {\sup }\limits_{\left( {\xi,j} \right) \in B \times S} \E\left[ {\mathop {\sup }\limits_{s \le \tau  \le t}
 | {u\left( {\tau,s,\xi ,j} \right)} |^2 } \right] < \infty\quad \quad \forall t> s.
\]

Recall that  $u_t(s,\xi,j)$
is the segment of the solution $u(t,s,\xi,j)$ given by
\[
u_t \left( {s ,\xi,j } \right)\left( \tau \right) = u\left( {t+\tau,s ,\xi,j } \right),\quad \text{for all }\quad \tau\in [-\rho,0].
\]
Notice   that
 $u_t \left( {s ,\xi,j } \right) \in L^2 \left( {\Omega ,C_\rho} \right)
$ for all $t\geq s$.

The rest of this paper is organized as follows.
Section 2  is devoted to
 the existence and periodicity
of evolution system of measures of  \eqref{eu1}-\eqref{eu2}.
In Section 3, we show the limiting behavior
of evolution system of measures of time
nonhomogeneous  Markov processes as delay goes to zero.
As an application, Section 4 is devoted to the existence , stability, periodicity  and limiting behavior
of evolution system of measures
on $C_\rho\times S$ for a controlled problem.

\section{Existence}
\setcounter{equation}{0}

 Define $H=C_\rho \times S$ with distance $
\left\| x_1-x_2 \right\|_H = \left\| \xi_1-\xi_2  \right\|_\rho   + \left| j_1-j_2 \right|,\,\,
\text{for}\,\,x_i = \left( {\xi_i ,j_i} \right) \in H$, $i=1,2$,
 and
$C_b(H)$ as the space of bounded continuous functions $\chi:H\rightarrow \R$
endowed with the norm
\[
\left\| \chi \right\|_\infty   = \mathop {\sup }\limits_{x \in H} \left| {\chi\left( x \right)} \right|,
\]
and denote by $L_b(H)$ the space of bounded Lipschitz functions on $H$. That is,
of functions $\chi\in C_b(H)$ for which
\[
\text{Lip}\left( \chi \right): = \mathop {\sup }\limits_{x_1 ,x_2  \in H} \frac{{\left| {\chi\left( {x_1 } \right)}  -
 {\chi\left( {x_2 } \right)} \right|}}{\|x_1-x_2\|_H} < \infty .
\]
The space $L_b(H)$ is endowed with the norm
\[
\left\| \chi \right\|_L  = \left\| \chi\right\|_\infty   + \text{Lip}\left( \chi \right).
\]
Let us denote by $\mathcal P(H)$ the set of probability measures on $(H,\mathcal B(H))$.
Define a metric on $\mathcal P(H)$ by
\[
\text{d}_L^* \left( {\mu _1 ,\mu _2 } \right) = \mathop {\sup }\limits_{\scriptstyle \chi \in L_b \left( H \right) \hfill \atop
  \scriptstyle \left\| \chi \right\|_L  \le 1 \hfill} \left| {\left( {\chi,\mu _1 } \right) - \left( {\chi,\mu _2 } \right)} \right|,
  \quad \mu_1,\mu_2\in \mathcal P(H).
\]

Let  $y(t,s,\xi,j)$ denote the $H$-valued process $(u_t (s,\xi,j), r_{s,j}(t))$.
 $y(t,s,\xi,j)$ be a time
nonhomogeneous  Markov process. Let $p(t,  s, \xi, j, (dy , {k}))$ denote the transition probability
of the process $y(t,s,\xi,j)$. For $A\subset \mathcal B(C_\rho)$ and $B\subset S$,
 let $P(t, s, \xi,  j, A \times B)$ denote the probability of event $\{y(t,s,\xi,j)\in A \times B\}$
given initial condition $y(s,s,\xi,j) = (\xi, j)$ at time $t=s$, i.e.,
\[
P\left( {t,s, \xi,j,A \times B} \right) =
{\int_{A\times B} {p\left( {t,s,\xi, j,(dy,k)} \right)} },
\]
where ${\int_{A\times B} {p\left( {t,s,\xi, j,(dy,k)} \right)} }=\sum\limits_{k \in B}
{\int_A {p\left( {t,s,\xi, j,(dy,k)} \right)} } $.
We define the transition evolution operator
\[
P_{s,t} \varphi \left( {\xi ,j} \right) = \E\left[ {\varphi \left( {y\left( {t,s,\xi ,j}
 \right)} \right)} \right], \quad \varphi\in C_b(H).
\]
It is easy to verify that $P_{s,t}$ is Feller, that is, $P_{s,t}:C_b(H)\rightarrow C_b(H)$, for $s<t$.
Denote by $P_{s,t}^*: \mathcal P(H)\rightarrow \mathcal P(H)$ the duality
operator of $P_{s,t}$. For $(\xi,j)\in H$,
denote by $\delta_{\xi ,j}$ the Dirac measure  concentrating  on  $(\xi,j)$.

In this section, we show existence  of   an evolution system of measures $
\left( {\mu _t } \right)_{t \in \R}$
 indexed by $\R$. An evolution system of measures $
\left( {\mu _t } \right)_{t \in \R}$ satisfies
 each $\mu _t$, $t\in \R$, is a probability measure on $H$ and
 \[
\int_{H} {P_{s,t} \varphi \left( \xi,j \right)\mu _s \left( {d\xi,j} \right)}  =
\int_{H} {\varphi \left( \xi,j \right)\mu _t \left( {d\xi,j} \right),\quad\forall
 \varphi  \in C_b \left( H \right)} ,\quad s < t.
\]
For $\varpi>0$, the evolution system of measures $\mu _t$, $t\in \R$, is $\varpi$-periodic, if
$$\mu _t=\mu _{t+\varpi},\quad  \forall t\in \R.
$$

We will apply general  Krylov-Bogolyubov's method
  to prove the existence of evolution system of  measures
  of \eqref{eu1}-\eqref{eu2}.
  To that end, fixed $(\xi,j)\in H$,
   for each $n\in \N$ and  $t\geq -n+\rho$,  define  a probability measure $\mu_{n,t}\in \mathcal P(H)$
  by
\begin{equation}\label{ie1}
\mu _{n,t}  = \frac{1}{{t -\rho+ n}}\int_{ - n}^{t-\rho} {P\left( {t,\tau,\xi ,j, \cdot  \times  \cdot } \right)} d\tau.
\end{equation}

\begin{lem}\label{Lkb}
Suppose for each $t\in \R$ the sequence $\left\{ {\mu _{n,t} } \right\}_{n = 1}^\infty$
 is tight on $H$. Then \eqref{eu1}-\eqref{eu2}
has an evolution system of measures.
\end{lem}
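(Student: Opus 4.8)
The plan is to apply the general Krylov-Bogolyubov method, whose core is a standard averaging and tightness argument adapted to the non-autonomous (evolution) setting. First I would fix $t\in\R$ and exploit the tightness hypothesis: since $\{\mu_{n,t}\}_{n=1}^\infty$ is tight on $H$, Prohorov's theorem yields a subsequence (which I will relabel) converging weakly to some limit measure $\mu_t\in\mathcal P(H)$. To define a consistent family $(\mu_t)_{t\in\R}$ one must be careful that the same subsequence of indices $n$ works simultaneously for all $t$; I would handle this by a diagonal extraction over a countable dense set of times $t$ and then pass to general $t$ using the Feller continuity of $P_{s,t}$, or alternatively fix one diagonal subsequence $n_\ell\to\infty$ once and for all and define $\mu_t$ as the weak limit of $\mu_{n_\ell,t}$ along it for every $t$.

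The heart of the proof is to verify the defining evolution identity
\[
\int_{H} P_{s,t}\varphi(\xi,j)\,\mu_s(d\xi,j) = \int_{H}\varphi(\xi,j)\,\mu_t(d\xi,j),\qquad \varphi\in C_b(H),\ s<t.
\]
In dual form this says $P_{s,t}^*\mu_s=\mu_t$, so I would show that the averaged measures $\mu_{n,s}$ are approximately invariant under $P_{s,t}^*$ and that the error vanishes as $n\to\infty$. Concretely, using the Chapman-Kolmogorov (semigroup/cocycle) property $P_{\tau,s}P_{s,t}=P_{\tau,t}$ together with the Feller property, I compute
\[
P_{s,t}^*\mu_{n,s} = \frac{1}{s-\rho+n}\int_{-n}^{s-\rho} P(t,\tau,\xi,j,\cdot\times\cdot)\,d\tau,
\]
which differs from $\mu_{n,t}$ only through the mismatch in the two averaging windows $[-n,s-\rho]$ versus $[-n,t-\rho]$ and the two normalizing factors $s-\rho+n$ versus $t-\rho+n$. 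The total variation of this difference is controlled by the length of the symmetric difference of the intervals, namely of order $|t-s|/(n+\text{const})$, since each $P(\cdot)$ is a probability measure of mass one; hence it tends to $0$ as $n\to\infty$.

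I would then pass to the limit along the chosen subsequence $n_\ell$. Testing against a fixed $\varphi\in C_b(H)$ and using that $P_{s,t}\varphi\in C_b(H)$ (the Feller property established in the excerpt), weak convergence $\mu_{n_\ell,s}\to\mu_s$ gives $\int_H P_{s,t}\varphi\,d\mu_{n_\ell,s}\to\int_H P_{s,t}\varphi\,d\mu_s$, while $\mu_{n_\ell,t}\to\mu_t$ gives $\int_H\varphi\,d\mu_{n_\ell,t}\to\int_H\varphi\,d\mu_t$; combined with the vanishing window/normalization error this yields exactly the evolution identity. The main obstacle I anticipate is organizational rather than analytic: ensuring a single subsequence serves all $t$ simultaneously and rigorously bounding the difference $\|P_{s,t}^*\mu_{n,s}-\mu_{n,t}\|$ in a metric compatible with testing against $C_b(H)$ functions, for which I would work with $\mathrm{d}_L^*$ and bounded Lipschitz test functions so that the $O(|t-s|/n)$ estimate on the averaging windows translates directly into convergence of the relevant integrals.
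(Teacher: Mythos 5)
Your proposal is correct and takes essentially the same route as the paper's own proof: apply Prokhorov's theorem to the averaged measures $\mu_{n,t}$, use the Chapman--Kolmogorov identity $P_{\tau,s}P_{s,t}=P_{\tau,t}$ to rewrite the action of $P_{s,t}^*$ on $\mu_{n,s}$, and observe that the mismatch between the averaging windows $[-n,s-\rho]$ and $[-n,t-\rho]$ (together with the two normalizations) is of order $|t-s|/n$ and hence vanishes in the limit, exactly as in the paper's displayed computation. One small caution: of your two mechanisms for obtaining a common subsequence, the first (diagonal extraction over a countable dense set of times, then defining $\mu_t:=P_{q,t}^*\mu_q$ for a dense-set time $q<t$ and checking consistency via Chapman--Kolmogorov) is the one that works --- and is in fact more careful than the paper, which extracts a subsequence separately for each time and asserts the resulting limit is ``obviously independent of $s$'' --- whereas your second alternative, a single subsequence along which $\mu_{n_\ell,t}$ converges for \emph{every} real $t$, cannot be produced by diagonalization over an uncountable index set.
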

\begin{proof}
The proof is similar to that of Theorem 3.1 in \cite{PR2007}.
For the convenience of readers, we still prove it in detail.
By the Prokhorov theorem there exists a probability measure $\mu_t$ on $H$
 and a    subsequence (which is still denoted by
 $\left\{ {\mu _{n,t} } \right\}_{n = 1}^\infty$) such that
\begin{equation}\label{ie2}
\mu _{n,t}  \to \mu_t ,\quad \text{as}\quad n \to \infty .
\end{equation}
Let $t\in \R$ and choose $s\leq t$. Define
\[
\nu _t : = P_{s,t}^ *  \mu _s.
\]
Note that if this definition is indeed independent of $s$,
$\nu _t=\mu _t$ and $\{\mu _t\}_{t\in \R}$ is an
evolution system of measures of \eqref{eu1}-\eqref{eu2}.
By the Feller property of $P_{s,t}$ we have for
every $\varphi \in C_b(H)$
\begin{align*}
\begin{split}
 &\left( {\varphi ,\nu_t } \right) = \left( {P_{s,t} \varphi ,\mu _s } \right)
 = \mathop {\lim }\limits_{n \to \infty } \left( {P_{s,t} \varphi ,\mu _{n,s} } \right) \\
  &= \mathop {\lim }\limits_{n \to \infty } \frac{1}{{s-\rho + n}}\int_{ - n}^{s-\rho} {P_{\tau ,s}
  \left( {P_{s,t} \varphi } \right)\left( {\xi ,j} \right)} d\tau  \\
 & = \mathop {\lim }\limits_{n \to \infty } \frac{1}{{s-\rho + n}}\int_{ - n}^{s-\rho} {P_{\tau ,t} \varphi \left( {\xi ,j} \right)} d\tau  \\
  &= \mathop {\lim }\limits_{n \to \infty } \frac{1}{t-\rho+n}\left( {\int_{ - n}^{t-\rho} {P_{\tau ,t}
  \varphi \left( {\xi ,j} \right)} d\tau  - \int_{s-\rho}^{t-\rho} {P_{\tau ,t} \varphi \left( {\xi ,j} \right)} d\tau } \right),
\end{split}
\end{align*}
which is obviously independent of $s$, $s\leq t$. This completes the proof.
\end{proof}

\begin{cor}\label{Ckb}
Suppose there exists a pair of $(\xi,j)\in H$ such that for each $t\in \R$ the laws of the process
$\left\{ {u_t({s,\xi,j})} \right\}_{s+\rho\leq t}$ is tight on $
{C_\rho}$. Then \eqref{eu1}-\eqref{eu2} has an evolution system of measures.
\end{cor}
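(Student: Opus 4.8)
The plan is to deduce this corollary directly from Lemma \ref{Lkb}: by that lemma it suffices to verify, for each fixed $t\in\R$, that the sequence $\{\mu_{n,t}\}_{n=1}^\infty$ defined in \eqref{ie1} is tight on $H=C_\rho\times S$. The key structural observation is that the switching state space $S$ is finite, hence compact, so that tightness on the product $H$ should reduce to tightness of the $C_\rho$-marginals of the $\mu_{n,t}$. I would make this reduction explicit: if $K\subset C_\rho$ is compact, then $K\times S$ is compact in $H$, and since every point of $H$ has second coordinate lying in $S$ one has
$$\mu_{n,t}\bigl((K\times S)^c\bigr)=\mu_{n,t}(K^c\times S)=\sum_{k\in S}\mu_{n,t}(K^c\times\{k\}),$$
so an $H$-tail bound for $\mu_{n,t}$ is nothing but a $C_\rho$-tail bound for its marginal.

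Next I would identify that $C_\rho$-marginal. Unwinding \eqref{ie1} together with the summation convention for $\int_{A\times B}p(t,s,\xi,j,(dy,k))$ and the identity $P(t,\tau,\xi,j,A\times S)=P(u_t(\tau,\xi,j)\in A)$, the marginal $\bar\mu_{n,t}$ of $\mu_{n,t}$ on $C_\rho$ is the time average
$$\bar\mu_{n,t}(A)=\frac{1}{t-\rho+n}\int_{-n}^{t-\rho}\mathcal L\bigl(u_t(\tau,\xi,j)\bigr)(A)\,d\tau,$$
where $\mathcal L(u_t(\tau,\xi,j))$ denotes the law of the segment $u_t(\tau,\xi,j)$ on $C_\rho$. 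Thus $\bar\mu_{n,t}$ is exactly an average of the laws appearing in the hypothesis.

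The heart of the argument then invokes the assumed tightness. Given $\eps>0$, the tightness of the family $\{u_t(s,\xi,j)\}_{s+\rho\le t}$ on $C_\rho$ furnishes a single compact set $K_\eps\subset C_\rho$ with $P(u_t(s,\xi,j)\notin K_\eps)<\eps$ for every $s\le t-\rho$. Averaging this bound over $\tau\in[-n,t-\rho]$ gives $\bar\mu_{n,t}(K_\eps^c)<\eps$ for all $n$, and hence, by the reduction above, $\mu_{n,t}((K_\eps\times S)^c)<\eps$ uniformly in $n$. This is precisely tightness of $\{\mu_{n,t}\}_{n=1}^\infty$ on $H$, so Lemma \ref{Lkb} applies and produces an evolution system of measures.

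The only delicate point — more a matter of care than a genuine obstacle — is the interchange of the tail estimate with the time average: it is the uniformity of $K_\eps$ over the index $s$, and not merely pointwise tightness of each individual law, that allows the bound to survive averaging over an interval $[-n,t-\rho]$ whose length grows with $n$. The finiteness of $S$ then makes the passage from $C_\rho$-tightness to $H$-tightness immediate, completing the plan.
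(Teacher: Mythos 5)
Your proposal is correct and follows exactly the paper's route: the paper's own proof simply asserts that tightness of the laws of $\{u_t(s,\xi,j)\}_{s+\rho\le t}$ on $C_\rho$ implies tightness of the averaged measures $\{\mu_{n,t}\}_{n=1}^\infty$ on $H$ and then invokes Lemma \ref{Lkb}. You have merely supplied the details the paper leaves implicit (the marginal computation, the uniform compact set surviving the time average, and the passage from $C_\rho$ to $H=C_\rho\times S$ via finiteness of $S$), all of which are accurate.
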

\begin{proof}
$\left\{ {u_t({s,\xi,j})} \right\}_{s+\rho\leq t}$ on $
{C_\rho}$ is tight implies that
the sequence $\left\{ {\mu _{n,t} } \right\}_{n = 1}^\infty$ defined in \eqref{ie1} is tight on $
H$. Combining  this fact and  Lemma \ref{Lkb}, the
proof is completed.

\end{proof}

\begin{cor}\label{Ckbp}
Suppose there exists a pair of $(\xi,j)\in H$ such that for each $t\in \R$,
$u_t({s,\xi,j})$ satisfies

$(\mathcal A_1)$  for any $\delta>0$, there exists a  positive constant  $R=R(\delta,\xi,j,t)$, independent of $s$, such that for $t\geq s$
\[
P\left\{ {\|{u_t\left( {s,\xi ,j} \right)}\|_\rho\leq R} \right\}>1-\delta
\]

and

$(\mathcal A_2)$   for any $\delta_1,\delta_2>0$ and $s+\rho\leq t$,
there exists $0<\eta=(\delta_1,\delta_2,\xi,j)<\rho$, independent of $s$, such that
\[
P\left\{ {\mathop {\sup }\limits_{t_2  - t_1  \le \eta ,t - \rho  \le t_1  \le t_2
\le t} \left| {u\left( {t_2,s,\xi,j } \right) - u\left( {t_1,s,\xi,j } \right)} \right| \leq\delta _1 } \right\} >1- \delta _2 .
\]

 Then \eqref{eu1}-\eqref{eu2} has an evolution system of measures.
\end{cor}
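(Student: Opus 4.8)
The plan is to reduce to Corollary \ref{Ckb} by showing that conditions $(\mathcal A_1)$ and $(\mathcal A_2)$ force the laws of $\left\{u_t(s,\xi,j)\right\}_{s+\rho\le t}$ to be tight on $C_\rho$ for each fixed $t\in\R$. The natural tool is the Arzel\`a--Ascoli characterization of relative compactness in $C_\rho=C([-\rho,0],\R^n)$: a subset is relatively compact precisely when it is uniformly bounded and equicontinuous. Accordingly, I would translate the two probabilistic hypotheses into a uniform bound on the sup-norm and a uniform control of the modulus of continuity, then assemble a single compact set carrying most of the mass.

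First I would observe that, via the identification $u_t(s,\xi,j)(\tau)=u(t+\tau,s,\xi,j)$ for $\tau\in[-\rho,0]$, the oscillation quantity in $(\mathcal A_2)$ is exactly the modulus of continuity of the $C_\rho$-valued random variable $u_t(s,\xi,j)$. Writing $w(\phi,\eta)=\sup_{|\tau_1-\tau_2|\le\eta}|\phi(\tau_1)-\phi(\tau_2)|$ and setting $t_i=t+\tau_i$, the event appearing in $(\mathcal A_2)$ coincides with $\left\{w(u_t(s,\xi,j),\eta)\le\delta_1\right\}$. This renders both hypotheses statements about the same random element of $C_\rho$, with all constants independent of $s$.

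Now fix $t\in\R$ and $\epsilon>0$. Using $(\mathcal A_1)$ with $\delta=\epsilon/2$ yields $R$ with $P\left\{\|u_t(s,\xi,j)\|_\rho\le R\right\}>1-\epsilon/2$ for every admissible $s$. For each $k\in\N$, applying $(\mathcal A_2)$ with $\delta_1=1/k$ and $\delta_2=\epsilon/2^{k+1}$ produces $\eta_k$ with $P\left\{w(u_t(s,\xi,j),\eta_k)\le 1/k\right\}>1-\epsilon/2^{k+1}$, again uniformly in $s$. I would then set
\[
K=\left\{\phi\in C_\rho:\ \|\phi\|_\rho\le R,\ w(\phi,\eta_k)\le 1/k\ \text{for all}\ k\in\N\right\}.
\]
The defining conditions are closed, so $K$ is closed; it is uniformly bounded by $R$ and equicontinuous (given $\epsilon'>0$, pick $k$ with $1/k<\epsilon'$, so that $w(\phi,\eta_k)<\epsilon'$ for all $\phi\in K$), whence $K$ is compact by Arzel\`a--Ascoli. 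A union bound then gives
\[
P\left\{u_t(s,\xi,j)\in K\right\}\ge 1-\frac{\epsilon}{2}-\sum_{k=1}^\infty\frac{\epsilon}{2^{k+1}}=1-\epsilon
\]
uniformly over all admissible $s$, which is precisely the tightness of $\left\{u_t(s,\xi,j)\right\}_{s+\rho\le t}$ on $C_\rho$. Corollary \ref{Ckb} then yields the evolution system of measures.

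The argument is essentially routine once the translation in the first step is made; the only point demanding care is the bookkeeping that keeps every constant ($R$ and the $\eta_k$) independent of $s$, since tightness is required uniformly over the index $s$ with $s+\rho\le t$. I expect no genuine obstacle beyond confirming that the hypotheses are stated with exactly this $s$-uniformity, which they are.
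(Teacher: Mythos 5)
Your proof is correct and follows essentially the same route as the paper's: reduce via Corollary \ref{Ckb} to tightness of the laws of $u_t(s,\xi,j)$ on $C_\rho$, then use $(\mathcal A_1)$ for a uniform bound and $(\mathcal A_2)$ for equicontinuity to produce, via Arzel\`a--Ascoli, a compact set carrying mass at least $1-\epsilon$ uniformly in $s$. If anything, your dyadic bookkeeping ($\delta_1=1/k$, $\delta_2=\epsilon/2^{k+1}$) handles the countably many equicontinuity constraints more carefully than the paper, which allots $\delta/3$ to each of its two defining events without spelling out the summable union bound over all moduli $\delta^*$.
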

\begin{proof}
By Corollary \ref{Ckb}, it is sufficient to show
$\left\{ {u_t({s,\xi,j})} \right\}_{s+\rho\leq t}$ is tight on $
{C_\rho}$.
By $(\mathcal A_1)$, we infer that  for every $\delta>0$,
there exists $R=R(\delta,\xi,j,t)>0$ such that for $s+\rho\leq t$
\begin{equation}\label{t1}
P\left\{ {\left\| {u_t(s,\xi,j) } \right\|_\rho \leq R } \right\} >1- \frac{1}{3}\delta.
\end{equation}
By $(\mathcal A_2)$, one can verify that given $\delta>0$, for any $\delta^*>0$, there exists
 $0<\eta=\eta(\delta,\delta^*,\xi,j)<\rho$ such that for all $s\leq t-\rho$,
\begin{equation}\label{t2}
P\left\{ {\mathop {\sup }\limits_{\tau_2  - \tau_1  < \eta , - \rho  \le \tau_1 \le \tau_2
\le 0} \left| {u\left( {t+\tau_2,s,\xi,j } \right) - u\left( {t+\tau_1,s,\xi,j  }
 \right)} \right| \leq \delta^* } \right\} > 1-\frac{1}{3}\delta .
\end{equation}
Given $\delta>0$, set
\begin{align*}
 Y_{1,\delta }  &= \left\{ {v \in C_\rho:\left\| {v } \right\|_\rho \le R } \right\}, \nonumber\\
 Y_{2,\delta }  &= \{ v \in C_\rho: \text{for any}\,\, \delta^*>0,\,\, \text{there exists a}
 \,\,\eta=\eta(\delta^*)>0\,\, \text{such that }\nonumber\\
& {\mathop {\sup }\limits_{\tau_2  - \tau_1  \le \eta , - \rho  \le \tau_1 \le \tau_2
\le 0} \left| {v\left( {\tau_2 } \right) - v\left( {\tau_1  } \right)} \right| \leq \delta^*} \},\nonumber
 \end{align*}
and
\begin{equation*}\label{t10}
Y_\delta   = Y_{1,\delta }  \cap Y_{2,\delta }.
\end{equation*}
By \eqref{t1} and \eqref{t2}  we get, for all $s+\rho\leq t$,
\begin{equation*}\label{t11}
P\left( {\left\{ {u_t(s,\xi,j)  \in Y_\delta } \right\}} \right) > 1 - \delta .
\end{equation*}
By the Arzela-Ascoli
theorem $Y_\delta$ is a precompact subset of $
{C_\rho}$.
\end{proof}

\begin{lem}\label{Lkbp}
Suppose for each $t\in \R$ the sequence $\left\{ {\mu _{n,t} } \right\}_{n = 1}^\infty$
 is tight on $H$ and the  $y(t,s,\xi,j)$
are $\varpi$-periodic Markov processes. Then \eqref{eu1}-\eqref{eu2}
has a $\varpi$-periodic evolution system of measures.
\end{lem}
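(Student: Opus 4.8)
The plan is to obtain existence from Lemma \ref{Lkb} and then upgrade the resulting family to a $\varpi$-periodic one, the new ingredient being the $\varpi$-periodicity of the process. Recall that the latter means precisely that the transition operators satisfy $P_{s+\varpi,t+\varpi}=P_{s,t}$ for all $s<t$, equivalently the transition probabilities obey $P(t+\varpi,\tau,\xi,j,\cdot)=P(t,\tau-\varpi,\xi,j,\cdot)$. The strategy is to construct a single evolution system of measures $(\mu_t)_{t\in\R}$ as in the proof of Lemma \ref{Lkb}, but taking the subsequential limits of $\mu_{n,t}$ along one subsequence valid simultaneously at $t$ and $t+\varpi$, and then to show $\mu_t=\mu_{t+\varpi}$.

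First I would establish the averaging estimate comparing $\mu_{n,t}$ and $\mu_{n,t+\varpi}$. Inserting the periodicity identity into \eqref{ie1} and performing the change of variables $\sigma=\tau-\varpi$ gives
\[
\mu_{n,t+\varpi}=\frac{1}{t+\varpi-\rho+n}\int_{-n-\varpi}^{t-\rho}P(t,\sigma,\xi,j,\cdot)\,d\sigma .
\]
Writing $A=t-\rho+n$ and pairing against any $\varphi\in C_b(H)$, a direct computation using $|P_{\sigma,t}\varphi(\xi,j)|\le\|\varphi\|_\infty$ on the two integrals over $[-n,t-\rho]$ and $[-n-\varpi,-n]$ yields
\[
\bigl|(\varphi,\mu_{n,t+\varpi})-(\varphi,\mu_{n,t})\bigr|\le\frac{2\varpi\,\|\varphi\|_\infty}{t+\varpi-\rho+n}\longrightarrow 0\quad(n\to\infty).
\]
Thus $\mu_{n,t+\varpi}-\mu_{n,t}\to 0$ weakly for every $t$; this is the computational core and reflects the fact that averaging an $\varpi$-periodic kernel over long time windows is insensitive to an $\varpi$-shift of the window.

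Next I would run the Krylov--Bogolyubov construction with a subsequence chosen to converge at $t$ and $t+\varpi$ at once. Let $T=\mathbb Q+\varpi\mathbb Z$, a countable dense subset of $\R$ that is invariant under translation by $\varpi$. Using the tightness hypothesis together with Prokhorov's theorem and a diagonal argument over $T$, extract one subsequence along which $\mu_{n,t}\to\mu_t$ for every $t\in T$. Exactly as in Lemma \ref{Lkb}, the Feller property gives the evolution relation $P_{s,t}^*\mu_s=\mu_t$ for $s,t\in T$ with $s<t$; this family then extends to all $t\in\R$ by setting $\mu_t:=P_{s,t}^*\mu_s$ for any $s\in T$ with $s<t$, the choice of $s$ being immaterial because of the Chapman--Kolmogorov identity $P_{s_1,t}^*=P_{s_2,t}^*P_{s_1,s_2}^*$, and one checks as before that $(\mu_t)_{t\in\R}$ is an evolution system of measures. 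For $t\in T$ the averaging estimate forces $(\varphi,\mu_t)=(\varphi,\mu_{t+\varpi})$ for all $\varphi\in C_b(H)$, hence $\mu_t=\mu_{t+\varpi}$; periodicity then propagates to arbitrary $t\in\R$ by choosing $s\in T$ with $s<t$ and using $\mu_{t+\varpi}=P_{s+\varpi,t+\varpi}^*\mu_{s+\varpi}=P_{s,t}^*\mu_s=\mu_t$, where $P_{s+\varpi,t+\varpi}=P_{s,t}$ and $\mu_{s+\varpi}=\mu_s$.

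The main obstacle is not any single estimate but the bookkeeping that makes the comparison of $\mu_t$ with $\mu_{t+\varpi}$ legitimate: the subsequential limits defining the two measures must be taken along one common subsequence, which is why I would select the diagonal subsequence over the $\varpi$-invariant dense set $T$ rather than extract separately for each time as in Lemma \ref{Lkb}. Once convergence is synchronized, both the well-definedness of the extension $\mu_t=P_{s,t}^*\mu_s$ and the transfer of periodicity from $T$ to all of $\R$ rest on the evolution (semigroup) identity for $P_{s,t}$ and on $P_{s+\varpi,t+\varpi}=P_{s,t}$, so no further analytic difficulty arises.
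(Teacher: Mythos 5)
Your proof is correct and follows essentially the same route as the paper: the core step --- using $\varpi$-periodicity and a change of variables to rewrite $\mu_{n,t+\varpi}$ as an average of the same kernel $P(t,\cdot\,,\xi,j,\cdot\times\cdot)$ over a window shifted by $\varpi$, so that $\mu_{n,t+\varpi}-\mu_{n,t}\to 0$ weakly and hence common subsequential limits at $t$ and $t+\varpi$ coincide --- is exactly the paper's computation. Your diagonal extraction over the $\varpi$-invariant countable dense set $T$, the extension via $\mu_t:=P_{s,t}^*\mu_s$, and the propagation of periodicity through $P_{s+\varpi,t+\varpi}=P_{s,t}$ are just the explicit bookkeeping that the paper compresses into ``following the proof process of Lemma \ref{Lkb}''.
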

\begin{proof}
It follows from periodic property of $y(t,s,\xi,j)$ that
\begin{align*}
\mu _{n,t+\varpi}  &= \frac{1}{{t-\rho + n+\varpi}}\int_{ - n}^{t-\rho+\varpi} {P\left( {t+\varpi,s,\xi ,j, \cdot  \times  \cdot } \right)} ds\\
&= \frac{1}{{t-\rho + n+\varpi}}\int_{ - n-\varpi}^{t-\rho} {P\left( {t+\varpi,s+\varpi,\xi ,j, \cdot  \times  \cdot } \right)} ds\\
&= \frac{1}{{t -\rho+ n+\varpi}}\int_{ - n-\varpi}^{t-\rho} {P\left( {t,s,\xi ,j, \cdot  \times  \cdot } \right)} ds,
\end{align*}
which means that for any subsequence $\left\{ {\mu _{n_k,t} } \right\}_{k = 1}^\infty$ of
$\left\{ {\mu _{n,t} } \right\}_{n = 1}^\infty$, if $\mathop {\lim }\limits_{k \to \infty } \mu _{n_k ,t}  = \mu _t$,
then $\mathop {\lim }\limits_{k \to \infty } \mu _{n_k ,{t+\varpi}}  = \mu _{t+\varpi}=\mu_{t}$.
Following the proof process of  Lemma \ref{Lkb},  \eqref{eu1}-\eqref{eu2}
has a $\varpi$-periodic  evolution system of measures.
The proof is completed.

\end{proof}

\section{Limits    of  evolution system of measures}
\setcounter{equation}{0}

In this section, we discuss
the limiting behavior of evolution system of measures
of problem \eqref{eu1}-\eqref{eu2}
as   $\rho \to 0$
where $\rho$
is the length of delay
  in \eqref{eu1}.
We first prove an abstract theorem  to guarantee
any limiting point of evolution system of measures is still evolution system of measures.

Suppose for every $\rho\in (0,1]$,
$\psi \in  C([-\rho,0],X)$ and $j\in S$,
     $\{ Z^\rho (t, s, \psi,j), t\geq s\}$
   is  a stochastic process in the state space
   $C([-\rho,0],X)$  with initial value $Z^\rho (s, s, \psi,j)=\psi$  and $r(s)=j$  at initial time $s$.
   Similarly,  assume   for every $z\in X$ and $j\in S$,
  $\{ Z^0(t, s, z,j), t\geq s\}$  is  a
  stochastic  process
  in the  state space $X$
   with initial value $z$ at initial time $Z^0(s, s, z,j)=s$
   and $j(s)=j$. Denote $\mathcal H_\rho=(C([-\rho,0],X) \times S)$ and $\mathcal H_0=X\times S$.
    Let ${\mathcal Z}^\rho(t,s,\xi,j)$, $\rho\in [o,1]$, denote the $\mathcal H_\rho$-valued
process $(Z^\rho(t,s,\xi,j), r_{s,j}(t))$. ${\mathcal Z}^\rho(t,s,\xi,j)$, $\rho\in [o,1]$, are time
nonhomogeneous  Markov process and its probability transition operators
are Feller.

Given $\rho \in (0,1]$,
 define an operator $T_\rho:\mathcal H_\rho\rightarrow \mathcal H_0$ by
 $T_\rho (\psi,j)=(\psi(0),j)$  for $(\psi,j)\in \mathcal H_\rho$, and
   ${\mathcal T}_\rho: \mathcal H_1
 \rightarrow  \mathcal H_\rho$
 by ${\mathcal T}_\rho (\psi,j)
   =(\phi,j)$ for  $(\psi,j) \in \mathcal H_1$
 with  $\psi(s)=\phi(s)$  and  $s\in [-\rho,0]$.
 In other words,  ${\mathcal T}_\rho$ is a  restriction
 operator
 from $ \mathcal H_1$ to   $\mathcal H_\rho$.

Given  $D \subseteq C([-\rho,0],X)$,
we write   ${ T}_\rho D=
  \{ {  T}_\rho(  \psi,j) : \psi \in D,\,j\in S\}$.
Since ${T}_\rho$ is continuous, if $D$ is compact,  then so is ${ T}_\rho D$.
Similarly,   given
$D_1\subseteq C([-1,0],X)$,
we write  ${\mathcal T}_\rho D_1
=\{ {\mathcal T}_\rho( \psi,j): \psi \in D_1,\,j\in S\}$.
Note that  if $D_1$ is compact,
then so is ${\mathcal T}_\rho D_1$.

Throughout this section, we assume that
for every  compact set $K \subseteq C([-1,0],X)$, $t\geq  s$
   and $\eta>0$,
\begin{equation}\label{h}
\mathop {\lim }\limits_{\rho  \to 0} \mathop {\sup }
\limits_{(\psi,j)  \in {\mathcal T}_\rho K}
P\left(
{d \left( {Z^\rho  \left( {t, s,  \psi,j} \right) (0) , \
Z^0 \left( {t, s, \psi(0),j } \right)} \right) \ge \eta }
\right ) = 0.
\end{equation}

\begin{thm}\label{Tmc}
Assume  \eqref{h}  holds true
and   $ \rho_n  \in (0,1]  $.
Let
$\{\mu^{\rho_n}_t\}_{t\in \R}$
 be  an  evolution system of  measures
of  ${\mathcal Z}^{\rho_n}$ in  $\mathcal H_{\rho_n}$
for all $n \in \N$ and $\{\mu_t\}_{t\in \R}$ be a family of   probability
measures  on $\mathcal H_{0}$. Suppose for each $t\in \R$  $\{\mu^{\rho_n}_t  \}_{n=1}^\infty$
is tight in the sense that
for every $\epsilon>0$, there exists a compact set
$K_1 \subseteq C([-1,0],X)$ such that
\be\label{Tmpc 1}
 \mu^{\rho_n}_t ({\mathcal T}_{\rho_n} K_1)
 > 1-\epsilon\quad \text{for all } \ n\in \N.
\ee
 Then  we have:

 (i)  The sequence $\{\mu^{\rho_n}_t\circ T_{\rho_n}^{-1}\}_{n=1}^\infty$
 is tight on $\mathcal H_{0}$.

 (ii) If  $ \rho_n \to 0$ and $\mu_t$ is a probability measure
 in $\mathcal H_{0}$ such that $\mu^{\rho_n}_t\circ T_{\rho_n}^{-1}
 \rightarrow  \mu_t$ weakly,
then  $\{\mu_t\}_{t\in \R}$
must be an evolution system of  measures of ${\mathcal Z}^0$.
\end{thm}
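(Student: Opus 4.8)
The plan is to handle the two parts separately, obtaining (i) from the purely set-theoretic relation between $T_{\rho_n}$ and $\mathcal T_{\rho_n}$, and (ii) by passing to the limit in the defining identity for $\mathcal Z^{\rho_n}$ applied to the pulled-back test functions $\varphi\circ T_{\rho_n}$, with the transition operators controlled through hypothesis \eqref{h}. Write $P^\rho_{s,t}$ for the (Feller) transition operators of $\mathcal Z^\rho$, $\rho\in[0,1]$.

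For part (i), fix $t$ and $\epsilon>0$ and take the compact $K_1\subseteq C([-1,0],X)$ from \eqref{Tmpc 1}. Since evaluation at $0$ is continuous and $S$ is finite, $\hat K:=\{(\psi(0),j):\psi\in K_1,\ j\in S\}$ is a compact subset of $\mathcal H_0$. The key observation is that $T_{\rho_n}\circ\mathcal T_{\rho_n}(\psi,j)=(\psi(0),j)$, so $\mathcal T_{\rho_n}K_1\subseteq T_{\rho_n}^{-1}(\hat K)$; hence
\[
(\mu^{\rho_n}_t\circ T_{\rho_n}^{-1})(\hat K)=\mu^{\rho_n}_t\big(T_{\rho_n}^{-1}(\hat K)\big)\ge \mu^{\rho_n}_t(\mathcal T_{\rho_n}K_1)>1-\epsilon
\]
for every $n$, which is exactly the asserted tightness of $\{\mu^{\rho_n}_t\circ T_{\rho_n}^{-1}\}_n$ on $\mathcal H_0$.

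For part (ii), I would first reduce to Lipschitz test functions: it suffices to verify $\int P^0_{s,t}\varphi\,d\mu_s=\int\varphi\,d\mu_t$ for all bounded Lipschitz $\varphi$, since then the probability measures $(P^0_{s,t})^*\mu_s$ and $\mu_t$ coincide (as $L_b(\mathcal H_0)$ is measure-determining, $\mathrm d_L^*$ metrizing weak convergence), which upgrades the identity to all $\varphi\in C_b(\mathcal H_0)$. Fixing such a $\varphi$ and $s<t$, and applying the evolution identity for $\mathcal Z^{\rho_n}$ to $\varphi\circ T_{\rho_n}\in C_b(\mathcal H_{\rho_n})$ gives
\[
\int_{\mathcal H_{\rho_n}}P^{\rho_n}_{s,t}(\varphi\circ T_{\rho_n})\,d\mu^{\rho_n}_s=\int_{\mathcal H_{\rho_n}}(\varphi\circ T_{\rho_n})\,d\mu^{\rho_n}_t=\int_{\mathcal H_0}\varphi\,d(\mu^{\rho_n}_t\circ T_{\rho_n}^{-1})\longrightarrow\int_{\mathcal H_0}\varphi\,d\mu_t,
\]
the last step using the assumed weak convergence at time $t$. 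Since $P^0_{s,t}\varphi\in C_b(\mathcal H_0)$, the weak convergence at time $s$ yields $\int_{\mathcal H_{\rho_n}}(P^0_{s,t}\varphi)\circ T_{\rho_n}\,d\mu^{\rho_n}_s\to\int_{\mathcal H_0}P^0_{s,t}\varphi\,d\mu_s$, so everything reduces to showing $\int_{\mathcal H_{\rho_n}}\Delta_n\,d\mu^{\rho_n}_s\to0$, where
\[
\Delta_n(\psi,j)=P^{\rho_n}_{s,t}(\varphi\circ T_{\rho_n})(\psi,j)-(P^0_{s,t}\varphi)(\psi(0),j)=\E\big[\varphi\big(Z^{\rho_n}(t,s,\psi,j)(0),k\big)-\varphi\big(Z^0(t,s,\psi(0),j),k\big)\big]
\]
and $k=r_{s,j}(t)$ is the shared switching component.

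The \textbf{main obstacle} is this last convergence, and it is precisely where \eqref{h} is used. Since $\varphi$ is Lipschitz (constant $L$) and the $S$-coordinate is common, for any $\eta>0$ one has the pointwise estimate $|\varphi(A,k)-\varphi(B,k)|\le L\eta+2\|\varphi\|_\infty\mathbf 1_{\{d(A,B)\ge\eta\}}$, hence $|\Delta_n(\psi,j)|\le L\eta+2\|\varphi\|_\infty\,P\big(d(Z^{\rho_n}(t,s,\psi,j)(0),Z^0(t,s,\psi(0),j))\ge\eta\big)$, while $|\Delta_n|\le2\|\varphi\|_\infty$ everywhere. Splitting the integral over $\mathcal T_{\rho_n}K_1$ (using the tightness \eqref{Tmpc 1} at time $s$, so $\mu^{\rho_n}_s(\mathcal T_{\rho_n}K_1)>1-\epsilon$) and its complement gives
\[
\Big|\int_{\mathcal H_{\rho_n}}\Delta_n\,d\mu^{\rho_n}_s\Big|\le L\eta+2\|\varphi\|_\infty\sup_{(\psi,j)\in\mathcal T_{\rho_n}K_1}P\big(d(Z^{\rho_n}(t,s,\psi,j)(0),Z^0(t,s,\psi(0),j))\ge\eta\big)+2\|\varphi\|_\infty\epsilon.
\]
Letting $n\to\infty$ (so $\rho_n\to0$), the middle term vanishes by \eqref{h} applied to $K_1$, whence $\limsup_n|\int\Delta_n\,d\mu^{\rho_n}_s|\le L\eta+2\|\varphi\|_\infty\epsilon$; as $\eta,\epsilon>0$ are arbitrary the limit is $0$. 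The subtle point to flag is that a generic $\varphi\in C_b(\mathcal H_0)$ need not be uniformly continuous on the possibly non-compact $X$, and \eqref{h} only controls the endpoints in probability rather than the distances themselves; the reduction to Lipschitz functions, combined with the tightness-driven localization to $K_1$, is exactly what makes the split above close.
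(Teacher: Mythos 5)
Your proposal is correct and follows essentially the same route as the paper: part (i) via the inclusion $\mathcal T_{\rho_n}K_1\subseteq T_{\rho_n}^{-1}(K_0\times S)$ with $K_0=\{\psi(0):\psi\in K_1\}$, and part (ii) by reducing to $\varphi\in L_b(\mathcal H_0)$, applying the evolution identity of $\mu^{\rho_n}$ to $\varphi\circ T_{\rho_n}$, and closing the gap between $Z^{\rho_n}(t,s,\psi,j)(0)$ and $Z^0(t,s,\psi(0),j)$ by the same double splitting (over $\mathcal T_{\rho_n}K_1$ versus its complement, then over the event $\{d\ge\eta\}$ versus its complement) controlled by \eqref{Tmpc 1}, the Lipschitz bound, and \eqref{h}. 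Your explicit justification that $L_b(\mathcal H_0)$ is measure-determining is a small addition the paper leaves implicit, but it does not change the argument.
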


\begin{proof}
 $(i)$.  Given $t\in \R$ and $\epsilon>0$, let    $K_1\subseteq C([-1,0],X)$
 be the compact set satisfying \eqref{Tmpc 1}.
 Denote by $K_0 = \{ \psi (0): \psi\in K_1\}$.
 Then $K_0\times S$ is a compact subset of $\mathcal H_{0}$   and
 for all $n\in \N$,
\be\label{wan0}
\mu^{\rho_n}_t\circ T_{\rho_n}^{-1}(  K_0\times S)
\geq \mu^{\rho_n}_t({\mathcal T}_{\rho_n} K_1)>1-\epsilon,
\ee
 which shows that     $\{\mu^{\rho_n}_t\circ T_{\rho_n}^{-1}\}$  is tight.

$(ii)$.  We only need to verify that for all   $\varphi\in L_b(\mathcal H_{0})$ and $s\leq t$,
\be\label{wan1}
\int_{\mathcal H_{0}} {\E \varphi\left( {\mathcal Z^0 \left( {t,s,z,j} \right)} \right)}
 \mu_s \left( {dz,j} \right) = \int_{\mathcal H_{0}} {\varphi\left(z,j \right)\mu_t \left( {dz,j} \right)}.
\ee
Notice that\begin{align*}
\int_{\mathcal H_{0}} \varphi( z,j)\mu ^{\rho_n}_t \circ T_{\rho_n}^{-1}(dz,j)
&=\int_{\mathcal H_{\rho_n}}\varphi\left( T_{\rho_n}(\psi,j )\right)\mu ^{\rho _n }_t (d\psi,j)\nonumber\\
&= \int_{\mathcal H_{\rho_n}} {\E \varphi\left( {T_{\rho_n}
 ( Z^{\rho _n }  \left( {t,s, \psi,j} \right),r_{s,j}(t))}
   \right)\mu ^{\rho _n }_s \left( {d\psi,j} \right)} \nonumber\\
   &= \int_{\mathcal H_{\rho_n}}
  {\E \varphi\left( { Z^{\rho _n } \left( {{t},s,\psi,j} \right) (0),r_{s,j}(t)}
   \right)\mu ^{\rho _n }_s \left( {d\psi,j } \right)}
\end{align*}
 which together with \eqref{wan0} yields that
\begin{align}\label{wan3}
\begin{split}
& \left| {\int_{\mathcal H_{0}} {\E g\left( {\mathcal Z^0 \left( {t,s,z,j} \right)} \right)}
 \mu^{\rho_n}_s\circ T_{\rho_n}^{-1} \left( {dz,j} \right) -
 \int_{\mathcal H_{0}} {\varphi\left( z,j \right)\mu ^{\rho_n}_t \circ T_{\rho_n}^{-1}
  \left( {dz,j} \right)} } \right| \\
 & \le \int_{\mathcal H_{\rho_n}} {\E\left| {\varphi\left( {Z^0 \left( {t,s,\psi(0),j } \right),r_{s,j}(t)} \right)
 - \varphi\left( {Z^{\rho _n } \left( {t,s,\psi,j} \right)(0),r_{s,j}(t)} \right)  } \right|}
 \mu ^{\rho _n }_s \left( {d\psi,j } \right) \\
 & \le \int_{ {\mathcal T}_{\rho_n}  K_1 }
  \E\left|
 \varphi ( {Z^0 \left( {t,s,\psi(0),j} \right ),r_{s,j}(t) } )
 - \varphi
 ( {Z^{\rho _n } \left( {t,s,\psi,j} \right)  } (0),r_{s,j}(t) )   \right|
  \mu ^{\rho _n }_s \left( {d\psi,j} \right) \\
 &+ \int_{{{\mathcal H_{\rho_n}}} \setminus {\mathcal T}_{\rho_n}
  K_1}
  \E\left|
 \varphi ( {Z^0 \left( {t,s,\psi(0),j} \right ),r_{s,j}(t) } )
 - \varphi
 ( {Z^{\rho _n } \left( {t,s,\psi,j} \right)  } (0) ,r_{s,j}(t))   \right|
  \mu ^{\rho _n }_s \left( {d\psi,j} \right)\\
   & \le \int_{ {\mathcal T}_{\rho_n}  K_1 }
  \E\left|
 \varphi ( {Z^0 \left( {t,s,\psi(0),j} \right ),r_{s,j}(t) } )
 - \varphi
 ( {Z^{\rho _n } \left( {t,s,\psi,j} \right)  } (0),r_{s,j}(t) )   \right|
  \mu ^{\rho _n }_s \left( {d\psi,j} \right) +2\epsilon\sup_{x\in {\mathcal H_{0}}} |\varphi(x)|.
  \end{split}
\end{align}
 Since $\varphi\in L_b({\mathcal H_{0}})$,  given $\epsilon>0$, there exists
  $\eta>0$ such that
  $|\varphi(y,j)-\varphi(z,j)|<\epsilon$ if   $d(y,z)<\eta$ and $j\in S$.
Then we get
\begin{align}\label{wan4}
\begin{split}
 &   \int_{ {\mathcal T}_{\rho_n}  K_1 }
  \E\left|
 \varphi ( {Z^0 \left( {t,s,\psi(0),j} \right ),r_{s,j}(t) } )
 - \varphi
 ( {Z^{\rho _n } \left( {t,s,\psi,j} \right)  } (0),r_{s,j}(t) )
  \right|
  \mu ^{\rho _n }_s \left( {d\psi,j} \right) \\
  &=  \int_{ {\mathcal T}_{\rho_n}  K_1 }
  \left (
  \int_{Y }
  \left|
  \varphi ( {Z^0 \left( {t,s,\psi(0),j} \right ),r_{s,j}(t) } )
 - \varphi
 ( {X^{\rho _n } \left( {t,s,\psi,j} \right)  } (0),r_{s,j}(t) )
   \right |   P(d\omega)
  \right )
    \mu ^{\rho _n }_s \left( {d\psi,j} \right) \\
   &\quad+
    \int_{ {\mathcal T}_{\rho_n}  K_1 }
  \left (
  \int_{Y^C }
  \left|
  \varphi ( {Z^0 \left( {t,s,\psi(0),j} \right ),r_{s,j}(t) } )
 - \varphi
 ( {Z^{\rho _n } \left( {t,s,\psi,j} \right)  } (0),r_{s,j}(t) )
   \right |   P(d\omega)
  \right )
    \mu ^{\rho _n }_s \left( {d\psi,j} \right) \\
  &\le 2\sup_{x\in \mathcal H_{0}} |\varphi(x)|
    \mathop {\sup }\limits_{(\psi,j) \in   {\mathcal T}_{\rho_n}  K_1 }
     P\left(
  {\left\{
  d \left(
{Z^{\rho _n } \left( {t,s,\psi,j} \right)  } (0), \
{Z^0 \left( {t,s,\psi(0),j} \right ) }
  \right  )
  \ge \eta  \right \} }
  \right )
  + \epsilon,
  \end{split}
 \end{align}
 where $Y=\left\{\omega\in \Omega:
  d \left(
{Z^{\rho _n } \left( {t,s,\psi,j} \right)  } (0), \
{Z^0 \left( {t,s,\psi(0),j} \right ) }
  \right  )
  \ge \eta  \right \}$ and $Y^C=\Omega-Y^C$.
 It follows from \eqref{h}  and  \eqref{wan3}-\eqref{wan4} that
\begin{align}\label{wan5}
\begin{split}
 &\mathop {\lim }\limits_{n \to \infty }
 \left| {\int_{\mathcal H_{0}} {\E g\left( {\mathcal Z^0 \left( {t,s,z,j} \right)} \right)}
 \mu^{\rho_n}_s\circ T_{\rho_n}^{-1} \left( {dz,j} \right) -
  \int_{\mathcal H_{0}} {\varphi\left( z,j \right)\mu ^{\rho_n}_t \circ T_{\rho_n}^{-1}
  \left( {dz,j} \right)} } \right|\\
 &\quad \le  \epsilon  + 2\epsilon  \sup_{x\in \mathcal H_{0}} |\varphi(x)|.
   \end{split}
\end{align}
   Since $\epsilon>0$ is arbitrary and $\mu^{\rho_n}_t\circ T_{\rho_n}^{-1}
   \rightarrow \mu_t$ weakly,
   we
 get  \eqref{wan1} from \eqref{wan5}.
 By \eqref{wan1} we know that
    $\{\mu_t\}_{t\in \R}$ is an evolution system of   measures of
 ${\mathcal Z}^ {0} $.
\end{proof}

\section{Application}
\setcounter{equation}{0}

In this section, we apply the  abstract results obtained in Section 2 and 3 to
a hybrid stochastic differential equation with delays, which comes from a control problem.

In \cite{LLLM2022}, Li et. al. investigated how to design a feedback
control based on discrete-time state observations to stabilise a
given unstable hybrid stochastic differential equation in the sense of asymptotic stability
in distribution. The specific description of the problem is as follows.
Consider an unstable hybrid stochastic differential equation
\be \label{oe}
du\left( t \right) = h\left( {r\left( t \right),u\left( t \right)} \right)dt +
\sigma \left( {r\left( t \right),u\left( t \right)} \right)dW\left( t \right), \quad t>s,
\ee
where $s\in \R$ and $u(t)\in \R^n$ is the state.
The aim  is to design
a linear feedback control $A\left( {r\left( t \right)} \right)u\left( {\left[ {t/\rho } \right]\rho } \right)
$  in the drift part so that the
controlled system
\be \label{ce}
du\left( t \right) = \left( {h\left( {r\left( t \right),u\left( t \right)} \right) + A\left( {r\left( t \right)}
\right)u\left( {\left[ {t/\rho } \right]\rho } \right)} \right)dt +
 \sigma \left( {r\left( t \right),u\left( t \right)} \right)dW\left( t \right), \quad t>s,
\ee
has an evolution system of measures $\left( {\mu _t } \right)_{t \in \R}$,
which is    asymptotically stable in distribution (defined later).
Here $A(j)\in \R^{n\times n}$, for $j\in S$,
$0<\rho\leq 1$ is a constant and $\left[ {t/\rho} \right]$
is the integer of $t/\rho$.
Define $\rho_0:\R\rightarrow [0,\rho ]$ by  for $k\in \mathbb Z$
\[
\rho _0 \left( t \right) = t - k\rho,\quad k\rho  \le t < \left( {k + 1} \right)\rho.
\]
Then Eq. \eqref{ce} can be written as
\be \label{ced}
du\left( t \right) = \left( {h\left( {r\left( t \right),u\left( t \right)} \right) + A\left( {r\left( t \right)}
\right)u\left( {t-\rho_0 \left( t \right) } \right)} \right)dt +
 \sigma \left( {r\left( t \right),u\left( t \right)} \right)dW\left( t \right), \quad t>s,
\ee

We assume
 $h: S\times \R^n \rightarrow \R^n$ and
 $\sigma: S\times \R^n \rightarrow \R^{n\times m}$
is  globally Lipschitz in the second variable  uniformly with respect to  $j\in S$.
Denote $f(r(t),u(t),u(t-\rho _0(t)))=h(r(t),u(t))+A(r(t))u(t-\rho _0(t))$
 and $g(r(t),u(t))=\sigma(r(t),u(t))$. Then the functions $f,g$ satisfy assumptions $(A_0)$ adn $( A_1)$.
It is well known that
under Condition  $(A_0)$ and $( A_1)$,
we can show  that
  for any $\xi  \in C_\rho$ and $r(s)=j\in S$,
 Eq. \eqref{ced} has a unique solution $u^\rho(t,s,\xi,j)$.
Let  $y^\rho(t,s,\xi,j)$ denote the $H$-valued process $(u_t^\rho (s,\xi,j), r_{s,j}(t))$.
Since $\rho_0(t)$ is $\rho$-periodic,  $y^\rho(t,s,\xi,j)$ is a time
$\rho$-periodic  Markov process.

We now recall the definition of
 asymptotic stability in distribution of the evolution system of measures.
\begin{defn}
The evolution system of measures $\left( {\mu _t } \right)_{t \in \R}$ of  Eq. \eqref{ced}
is said to be asymptotic stability in distribution if for any $\varphi\in C_b(H)$,
 \[
\mathop {\lim }\limits_{t \to +\infty } \left[ {P_{s,t} \varphi \left( \xi,j \right) -
\int_{H} {\varphi \left( x,j \right)\mu _t \left( {dx,j} \right)} } \right]
= 0,\quad\forall s \in \R,\quad (\xi,j) \in H.
\]
\end{defn}

In the sequence, let us assume

$(A_2)$ There exists a positive number $\beta$ and
symmetric positive definite matrices $Q_j (j\in S)$ such that
\begin{align*}
\begin{split}
 &2\left( {x - y} \right)Q_j \left[ {f\left( {j,x,x} \right) - f\left( {j,y,y} \right)} \right] \\
  &\quad+ \text{trace}\left[ {\left( {g\left( {j,x} \right) - g\left( {j,y} \right)} \right)^T Q_j \left( {g\left( {j,x} \right)
   - g\left( {j,y} \right)} \right)} \right] \\
  &\quad+ \sum\limits_{i = 1}^N {\gamma _{ji} \left( {x - y} \right)Q_i } \left( {x - y} \right)
   \le  - \beta \left| {x - y} \right|^2
\end{split}
 \end{align*}
for all $(j,x,y)\in S\times \R^n\times \R^n$.

The main idea presented in \cite{LLLM2022} is as follows:

(i) Design a feedback
control based on continuous-time  state observations
$A\left( {r\left( t \right)} \right)u\left( t \right)
$ in the drift term to stabilise the
 unstable hybrid stochastic differential equation \eqref{oe}.
A controlled system  is obtained:
 \be \label{coe}
du^0\left( t \right) = \left( {h\left( {r\left( t \right),u^0\left( t \right)} \right) + A\left( {r\left( t \right)}
\right)u^0\left( {t } \right)} \right)dt +
 \sigma \left( {r\left( t \right),u^0\left( t \right)} \right)dW\left( t \right), \quad t>s.
\ee

 (ii) Show that when the observation interval $\rho$  is sufficiently small, dynamical behaviors of the solutions
 of Eq. \eqref{ce} and Eq. \eqref{coe} have similar properties.

Repeating the scheme used in the proof Lemma 3.4, 3.5 and 3.6 in \cite{LLLM2022}, we get the following
 lemmas in turn.
\begin{lem}\label{Lcr}
Suppose $(A_0)$-$(A_1)$ hold. Then
for every  compact set $K \subseteq C([-1,0],\R^n)$, $t>s$ and $\eta>0$,
\begin{equation*}\label{u12}
\mathop {\lim }\limits_{\rho  \to 0 } \mathop {\sup }\limits_{(\xi,j)
 \in {\mathcal T}_\rho K}
 P\left(
 {|    u^\rho \left( {t,s,\xi,j } \right) -
 u^0 \left( {t,s,\xi(0),j }  \right ) |   \ge \eta }
 \right)= 0.
\end{equation*}
\end{lem}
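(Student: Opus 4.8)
The plan is to reduce the probability bound to a second-moment estimate and then control that moment by a Gronwall argument driven by the oscillation of $u^\rho$ over one delay window. Since $u^\rho$ and $u^0$ are built from the \emph{same} Wiener process $W$ and the \emph{same} switching path $r_{s,j}$, I would compare them pathwise. Write $e(t)=u^\rho(t,s,\xi,j)-u^0(t,s,\xi(0),j)$ and note $e(s)=\xi(0)-\xi(0)=0$. By Chebyshev's inequality $P(|e(t)|\ge\eta)\le \eta^{-2}\E|e(t)|^2$, so it suffices to prove
\[
\lim_{\rho\to0}\ \sup_{(\xi,j)\in\mathcal T_\rho K\times S}\E|e(t)|^2=0 .
\]

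For the comparison I would first rewrite the drift of $de(t)$. Using the identity $f(j,x,x)=h(j,x)+A(j)x$ from the definition of $f$, the drift splits as $[f(r,u^\rho,u^\rho)-f(r,u^0,u^0)]\,dt+A(r)[u^\rho(t-\rho_0(t))-u^\rho(t)]\,dt$ (all coefficients evaluated at $r(t)$), while the diffusion coefficient is $\sigma(r,u^\rho)-\sigma(r,u^0)$. Applying the It\^o formula to $|e(t)|^2$ (the switching generator term drops out because $|e|^2$ does not depend on $r$, and $e$ is a continuous semimartingale), taking expectations, and using only $(A_0)$--$(A_1)$ together with $\|A(r)\|\le \max_{j\in S}\|A(j)\|<\infty$ and Young's inequality, I would obtain
\[
\frac{d}{dt}\E|e(t)|^2\le C_1\,\E|e(t)|^2+C_2\,\E\big|u^\rho(t-\rho_0(t))-u^\rho(t)\big|^2 ,
\]
with $C_1,C_2$ depending only on $L_f,L_g$ and $\max_j\|A(j)\|$. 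Since $e(s)=0$, Gronwall's inequality yields $\E|e(t)|^2\le C_2e^{C_1(t-s)}\int_s^t \E|u^\rho(\tau-\rho_0(\tau))-u^\rho(\tau)|^2\,d\tau$.

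The crux is the uniform increment estimate $\sup_{(\xi,j)\in\mathcal T_\rho K\times S}\E|u^\rho(\tau-\rho_0(\tau))-u^\rho(\tau)|^2\le C\rho$ for $\tau\in[s,t]$, with $C$ independent of $\rho$ and of $\xi$. I would establish this in two steps. First, a uniform-in-$\rho$ moment bound $\sup_\rho\sup_{(\xi,j)}\E\sup_{s\le\tau\le t}|u^\rho(\tau)|^2<\infty$: this follows from the linear growth of $h,\sigma$ and $|A(r)x|\le C|x|$ (consequences of $(A_0)$--$(A_1)$) via Gronwall, where the delay term is absorbed through $\E|u^\rho(\tau-\rho_0(\tau))|^2\le \|\xi\|_\rho^2+\E\sup_{s\le r\le\tau}|u^\rho(r)|^2$ and the fact that $\|\xi\|_\rho\le\sup_{\psi\in K}\sup_{-1\le\theta\le0}|\psi(\theta)|=:M<\infty$ by compactness of $K$. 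Second, writing $u^\rho(\tau)-u^\rho(\tau-\rho_0(\tau))$ as a Lebesgue integral plus an It\^o integral over a window of length $\le\rho$, Hölder's inequality and the It\^o isometry together with the moment bound give the $O(\rho)$ estimate; the boundary case $\tau-\rho_0(\tau)<s$ (possible only for $\tau<s+\rho$) is handled by the equicontinuity of $K$ furnished by Arzel\`a--Ascoli, which makes the oscillation of $\xi$ over $[-\rho,0]$ small uniformly over $K$.

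Combining these, $\E|e(t)|^2\le C\rho(t-s)e^{C_1(t-s)}\to0$ as $\rho\to0$, uniformly over $(\xi,j)\in\mathcal T_\rho K\times S$, and Chebyshev's inequality finishes the proof. The main obstacle is securing the moment and increment bounds \emph{uniformly} over the initial data in $\mathcal T_\rho K$ and over $\rho$ at the same time; compactness of $K$ (uniform boundedness and equicontinuity of its elements) is precisely what delivers the required uniform constant $M$ and the uniform modulus of continuity, so this is the step where the restriction operator $\mathcal T_\rho$ and the compactness hypothesis enter essentially.
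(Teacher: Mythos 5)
Your proof is correct and takes essentially the same route as the paper: the paper's own ``proof'' is simply to invoke the scheme of Lemmas 3.4--3.6 of \cite{LLLM2022}, which is precisely the argument you reconstruct (uniform second-moment bound, $O(\rho)$ increment estimate over one observation window, Gronwall comparison of $u^\rho$ and $u^0$ driven by the same $W$ and $r$, then Chebyshev). The extra details you supply---uniformity over ${\mathcal T}_\rho K$ via the boundedness and equicontinuity of the compact set $K$, and the boundary case $\tau-\rho_0(\tau)<s$---are exactly what is needed to adapt that scheme to the statement as formulated here.
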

\
\begin{lem}\label{Lcb}
Suppose $(A_0)$-$(A_2)$ hold. There exists a small enough $\rho^*>0$,
if $0<\rho\leq \rho^*$,    then  for  $s\in \R$, $(\xi,j)\in H$
any $\eta>0$, there exists a positive constant  $R=R(\eta,\xi,j)$, independent of $s$ and $\rho$, such that
for any $t\geq s$ and $0<\rho\leq \rho^*$,
\[
P\left\{ {\|{u_t^\rho\left( {s,\xi ,j} \right)}\|_\rho\leq R} \right\}>1-\delta
\]
\end{lem}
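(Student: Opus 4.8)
The plan is to establish a second-moment bound on the solution of \eqref{ced} that is uniform both in the initial time $s$ and in the delay length $\rho\in(0,\rho^{*}]$, and then to pass to the stated probability estimate by Chebyshev's inequality; note that this is precisely condition $(\mathcal A_1)$ of Corollary \ref{Ckbp}. The natural Lyapunov function is $V(x,j)=x^{T}Q_{j}x$, which by positive definiteness of the finite family $\{Q_{j}\}_{j\in S}$ satisfies $c_{1}|x|^{2}\le V(x,j)\le c_{2}|x|^{2}$ for constants $0<c_{1}\le c_{2}$. Applying It\^o's formula to $e^{\lambda t}V(u^{\rho}(t),r(t))$ for a small $\lambda>0$, with the $r(t)$-component contributing exactly the generator term $\sum_{i}\gamma_{ji}x^{T}Q_{i}x$ that appears in $(A_{2})$, and taking expectations, I would reduce the problem to bounding the expected generator of the delay system acting on $V$.

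The key observation is that the drift $f(j,u(t),u(t-\rho_0(t)))=h(j,u(t))+A(j)u(t-\rho_0(t))$ differs from its diagonal value $f(j,u(t),u(t))$ only through the delay correction $A(j)[u(t-\rho_0(t))-u(t)]$. On the diagonal I would invoke $(A_{2})$ with $y=0$, absorbing the constant terms $f(j,0,0)$ and $g(j,0)$---finite by $(A_{1})$---through Young's inequality, to obtain a dissipative bound of the form $-\beta|u(t)|^{2}+C_{0}$. The delay correction I would control by Cauchy--Schwarz as $\eps|u(t)|^{2}+C_{\eps}|u(t-\rho_0(t))-u(t)|^{2}$. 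Writing the increment $u(t)-u(t-\rho_0(t))$ as the stochastic integral of $f$ and $g$ over a window of length $\rho_0(t)\le\rho$ and using the linear growth implied by $(A_{0})$ and $(A_{1})$, the Cauchy--Schwarz inequality on the drift and the It\^o isometry on the diffusion yield $\E|u(t)-u(t-\rho_0(t))|^{2}\le C\rho\int_{t-\rho}^{t}(1+\E|u(r)|^{2})\,dr$, so that the delayed feedback enters with a coefficient proportional to $\rho$.

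Collecting these estimates produces a Halanay-type differential inequality for $U(t):=\E V(u^{\rho}(t),r(t))$ whose decay rate is comparable to $\beta$ and whose delayed feedback coefficient is $O(\rho)$. Choosing $\rho^{*}$ small enough that this feedback coefficient is dominated by the decay rate for every $0<\rho\le\rho^{*}$, the inequality furnishes $\sup_{t\ge s}\E|u^{\rho}(t,s,\xi,j)|^{2}\le M$ with $M=M(\xi,j)$ independent of $s$ and of $\rho\in(0,\rho^{*}]$; the $s$-independence is exactly what the exponential weight $e^{\lambda t}$ secures, since it converts the bound into one depending on the elapsed time $t-s$ only through a decaying factor. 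I would then upgrade this pointwise bound to the segment bound $\E\|u_{t}^{\rho}(s,\xi,j)\|_{\rho}^{2}=\E\sup_{-\rho\le\tau\le0}|u^{\rho}(t+\tau)|^{2}\le M'$ by applying Doob's maximal inequality and the Burkholder--Davis--Gundy inequality over the window of length $\rho\le 1$, inserting the moment bound at its left endpoint. Finally Chebyshev's inequality gives $P\{\|u_{t}^{\rho}\|_{\rho}>R\}\le M'/R^{2}$, and choosing $R$ large makes the right-hand side smaller than the prescribed tolerance, which is the asserted conclusion with $R$ depending only on the tolerance, $\xi$ and $j$, and not on $s$ or $\rho$.

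The main obstacle is the delay-correction step coupled with the required double uniformity: one must show that the dissipativity of the companion undelayed system \eqref{coe} encoded in $(A_{2})$ survives the delay perturbation uniformly in $\rho\in(0,\rho^{*}]$ and, at the same time, that the resulting moment bound is genuinely independent of the initial time $s$. Both are resolved by the $O(\rho)$ smallness of the delayed feedback and the Halanay comparison, but the delicate point is verifying that all constants ($\rho^{*}$, $M$, $M'$) can be fixed uniformly, rather than only along a subsequence of vanishing $\rho$.
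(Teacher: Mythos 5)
Your proposal is correct and follows essentially the same route as the paper, which proves this lemma simply by invoking the scheme of Lemmas 3.4--3.6 of \cite{LLLM2022} --- precisely the argument you describe: the Lyapunov function $V(x,j)=x^{T}Q_{j}x$ combined with $(A_2)$, It\^o's formula with an exponential weight, the $O(\rho)$ estimate for the increment $u(t)-u(t-\rho_0(t))$, a Halanay-type comparison to fix $\rho^{*}$, and the BDG/Chebyshev upgrade from pointwise moments to the segment norm $\|u_t^\rho\|_\rho$. One minor correction: in your increment estimate the It\^o-isometry (diffusion) term carries no prefactor $\rho$ in front of the integral --- the $O(\rho)$ smallness there comes solely from the integration window having length at most $\rho$, and the integrand should also include $\E|u(r-\rho_0(r))|^{2}$ from the drift --- but this weaker, correct bound is exactly what your Halanay step needs, so the argument is unaffected.
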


\begin{lem}\label{Lce}
Suppose $(A_0)$-$(A_2)$ hold. There exists a small enough $\rho^*>0$,
if $0<\rho\leq \rho^*$,        then for any $s\in \R$, $\eta>0$ and  bounded
 subset $B$ of $C_\rho$, there exists a $T=T(\eta,B)$, independent of $s$ and $\rho$,  such that
for $\left( {\xi _1 ,\xi _2 ,j} \right) \in B \times B \times S$ and $0<\rho\leq \rho^*$,
\[
P\left\{ {\| {(u_t^\rho\left( {s,\xi _1 ,j} \right)- u_t^\rho\left( {s,\xi _2 ,j} \right))} \|_\rho <
 \eta } \right\} \ge 1 - \eta,\quad \forall t \ge s+T.
\]

\end{lem}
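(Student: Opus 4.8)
The plan is to control the difference of the two solutions through a weighted Lyapunov functional built from the matrices $Q_j$ supplied by $(A_2)$, show that this difference decays in mean square at an exponential rate uniform in $s$ and in $\rho\in(0,\rho^*]$, and then conclude by Chebyshev's inequality. Write $u_i(t)=u^\rho(t,s,\xi_i,j)$, $i=1,2$, for the two solutions driven by the common Wiener process $W$ and the common chain $r(t):=r_{s,j}(t)$, and set $v(t)=u_1(t)-u_2(t)$. Since $g(r(t),u(t))=\sigma(r(t),u(t))$ carries no delay and $f(j,x_1,x_2)=h(j,x_1)+A(j)x_2$, the difference solves
\[
dv(t)=\big[h(r(t),u_1(t))-h(r(t),u_2(t))+A(r(t))\,v(t-\rho_0(t))\big]dt+\big[\sigma(r(t),u_1(t))-\sigma(r(t),u_2(t))\big]dW(t).
\]

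First I would apply It\^o's formula to $V(v(t),r(t))$ with $V(x,j)=x^{T}Q_j x$. The Markov-switching part produces exactly $\sum_i\gamma_{r(t)i}\,v(t)^{T}Q_i v(t)$, so that, after adding and subtracting $A(r(t))v(t)$ in the drift, the generator splits into the non-delayed dissipative part governed by $(A_2)$ and a delay correction,
\[
\mathcal{L}V(v(t),r(t))\le-\beta|v(t)|^2+2\,v(t)^{T}Q_{r(t)}A(r(t))\big[v(t-\rho_0(t))-v(t)\big].
\]
By Young's inequality the correction is bounded by $\tfrac{\beta}{2}|v(t)|^2+C\,|v(t-\rho_0(t))-v(t)|^2$, with $C$ depending only on $\beta$, $\max_j\|Q_j\|$ and $\max_j\|A(j)\|$. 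Taking expectations and using $c_1|x|^2\le V(x,j)\le c_2|x|^2$ uniformly in $j\in S$ gives
\[
\frac{d}{dt}\,\E V(v(t),r(t))\le-\tfrac{\beta}{2}\E|v(t)|^2+C\,\E|v(t-\rho_0(t))-v(t)|^2.
\]

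Next I would estimate the increment $\E|v(t)-v(t-\rho_0(t))|^2$. Writing it in integral form over $[t-\rho_0(t),t]$ and using $\rho_0(t)\le\rho$, Cauchy--Schwarz, the It\^o isometry, and the global Lipschitz bounds on $h$, $\sigma$ together with the linearity of the control term, one gets $\E|v(t)-v(t-\rho_0(t))|^2\le C'\rho\,\sup_{t-2\rho\le r\le t}\E|v(r)|^2$ with $C'$ independent of $s$ and $\rho$. Substituting this back yields a Halanay-type delay inequality
\[
\frac{d}{dt}\,\E V(v(t),r(t))\le-\tfrac{\beta}{2}\E|v(t)|^2+CC'\rho\,\sup_{t-2\rho\le r\le t}\E|v(r)|^2.
\]
Choosing $\rho^*$ so small that $CC'\rho^*/c_1<\tfrac{\beta}{4}$, the Halanay inequality furnishes constants $M_0\ge1$ and $\gamma>0$ depending only on $\beta,c_1,c_2,C,C',\rho^*$, hence independent of $s$ and $\rho\in(0,\rho^*]$, with $\E|v(t)|^2\le M_0\,e^{-\gamma(t-s)}\,\|\xi_1-\xi_2\|_\rho^2$ for $t\ge s$.

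Finally I would upgrade this to the segment norm. Applying the Burkholder--Davis--Gundy inequality to the martingale part of $v$ on the window $[t-\rho,t]$, together with the above estimate, gives $\E\|v_t\|_\rho^2\le M_1\,e^{-\gamma'(t-s)}\sup_{\xi\in B}\|\xi\|_\rho^2$ for suitable $M_1,\gamma'>0$, uniformly over $(\xi_1,\xi_2,j)\in B\times B\times S$, over $s$, and over $\rho\in(0,\rho^*]$. Since $B$ is bounded, $\Lambda:=\sup_{\xi\in B}\|\xi\|_\rho^2<\infty$, and Chebyshev's inequality gives $P\{\|v_t\|_\rho\ge\eta\}\le\eta^{-2}M_1\Lambda\,e^{-\gamma'(t-s)}$; choosing $T=T(\eta,B)$ so large that $\eta^{-2}M_1\Lambda e^{-\gamma' T}\le\eta$ yields the claim for all $t\ge s+T$, with $T$ independent of $s$ and $\rho$. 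The main obstacle is the delay term: one must convert the Halanay-type inequality into genuine exponential decay whose rate does not degenerate, and the threshold $\rho^*$ is precisely what makes the delay correction strictly subordinate to the dissipation provided by $(A_2)$.
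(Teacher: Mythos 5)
Your proposal is correct and follows essentially the same approach as the paper, which does not write out a proof at all but simply invokes ``the scheme used in the proof of Lemmas 3.4--3.6 in \cite{LLLM2022}'': namely, the Lyapunov function $V(x,j)=x^TQ_jx$ from $(A_2)$, the add-and-subtract trick isolating the discrete-observation (delay) error, the bound $\E|v(t)-v(t-\rho_0(t))|^2\le C'\rho\sup_{t-2\rho\le r\le t}\E|v(r)|^2$, absorption of that perturbation for $\rho\le\rho^*$ to get uniform mean-square exponential decay, and Chebyshev. Your write-up supplies exactly the details the paper leaves to the citation, so it matches rather than diverges from the paper's route.
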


By the similar argument as that of Lemma 2.3 in \cite{DDD2014}, one can easily verify that

\begin{lem}\label{Lcec}
Suppose $(A_0)$-$(A_2)$ hold.  There exists a small enough $\rho^*>0$,
if $0<\rho\leq \rho^*$,       then
 for any $(\xi,j)\in S$,    $\delta_1,\delta_2>0$ and $s+\rho\leq t$,
there exists $0<\eta=(\delta_1,\delta_2,\xi,j)<\rho$, independent of $s$, such that for $0<\rho\leq \rho^*$,
\[
P\left\{ {\mathop {\sup }\limits_{t_2  - t_1  \le \eta ,t - \rho  \le t_1  \le t_2
\le t} \left| {u^\rho\left( {t_2,s,\xi,j } \right) - u^\rho\left( {t_1,s,\xi,j }
\right)} \right| \leq\delta _1 } \right\} >1- \delta _2 .
\]

\end{lem}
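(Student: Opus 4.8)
The plan is to read this as a modulus-of-continuity (equicontinuity) estimate for the controlled process $u^\rho$ on the moving window $[t-\rho,t]$, and to establish it by splitting the increment of the solution into its drift and diffusion parts and controlling each \emph{uniformly in the initial time $s$}. The key structural inputs are the uniform boundedness already recorded in Lemma \ref{Lcb} and the dissipativity condition $(A_2)$; the latter is precisely what forces the estimates below to be independent of $s$. Since $u^\rho$ has continuous sample paths, the supremum over the pair $(t_1,t_2)$ is a well-defined random variable, so no extra measurability work is needed.

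First I would write, for $t-\rho\le t_1\le t_2\le t$,
\[
u^\rho(t_2,s,\xi,j)-u^\rho(t_1,s,\xi,j)=\int_{t_1}^{t_2} f\big(r(\tau),u^\rho(\tau),u^\rho(\tau-\rho_0(\tau))\big)\,d\tau+\int_{t_1}^{t_2} g\big(r(\tau),u^\rho(\tau)\big)\,dW(\tau).
\]
For the drift term I would invoke Lemma \ref{Lcb}, applied at $t$ and, to cover the delayed arguments $\tau-\rho_0(\tau)\in[t-2\rho,t]$, also at $t-\rho$ (using the deterministic bound $\|\xi\|_\rho$ whenever the argument drops below $s$ into the initial-data region). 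This produces, for the given $\delta_2$, a radius $R=R(\delta_2,\xi,j)$ and an event of probability larger than $1-\tfrac12\delta_2$ on which $\sup_{t-2\rho\le\tau\le t}|u^\rho(\tau)|\le R$. On that event $(A_0)$ and $(A_1)$ give $|f(\cdots)|\le 2L_f R+\sup_{t,j}|f(t,j,0,0)|=:K$, so the drift increment is bounded pathwise by $K\eta$, which is $\le\tfrac12\delta_1$ once $\eta$ is small. Crucially $R$ and $K$ are independent of $s$ because the bound in Lemma \ref{Lcb} is.

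The main work, and the main obstacle, is the diffusion term $M(t_1,t_2):=\int_{t_1}^{t_2} g(r(\tau),u^\rho(\tau))\,dW(\tau)$, where the supremum over $(t_1,t_2)$ must be controlled uniformly in $s$. Here I would first use a \emph{time-uniform} moment bound: the dissipativity $(A_2)$ together with the linear growth of $\sigma$ yields, by an It\^o/Gronwall argument, $\sup_{\tau\ge s}\E|u^\rho(\tau,s,\xi,j)|^p\le C_p$ for every $p\ge 2$, with $C_p$ independent of $s$ and of $\rho\le\rho^*$. With this, the Burkholder--Davis--Gundy inequality gives, for $p>2$,
\[
\E|M(t_1,t_2)|^p\le C_p'\,\E\Big(\int_{t_1}^{t_2}|g(r(\tau),u^\rho(\tau))|^2\,d\tau\Big)^{p/2}\le C_p''\,|t_2-t_1|^{p/2},
\]
again uniformly in $s$. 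The Kolmogorov continuity criterion (equivalently the Garsia--Rodemich--Rumsey inequality) then upgrades this into a modulus-of-continuity estimate, bounding $\E\big[\sup_{t_2-t_1\le\eta}|M(t_1,t_2)|^p\big]$ by a quantity that tends to $0$ as $\eta\to0$; Chebyshev's inequality turns this into $P\{\sup_{t_2-t_1\le\eta}|M(t_1,t_2)|>\tfrac12\delta_1\}<\tfrac12\delta_2$ for $\eta$ small. Intersecting with the good event from the drift step and taking $\eta<\rho$ small enough then gives the claim, with $\eta$ depending only on $\delta_1,\delta_2,\xi,j$. The delicate point throughout is uniformity in $s$: it is exactly the Lyapunov-type hypothesis $(A_2)$ that promotes the local-in-time second-moment bound quoted in the introduction to the time-uniform higher-moment bound needed here, and without it the constant in the Kolmogorov estimate would degrade as $t-s\to\infty$.
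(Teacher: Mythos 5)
Your overall strategy --- split the increment into drift plus stochastic integral, control the drift pathwise on the high-probability event supplied by Lemma \ref{Lcb}, and control the stochastic integral by a BDG/Kolmogorov modulus-of-continuity estimate --- is the standard route, and it is essentially the shape of the argument of Lemma 2.3 in \cite{DDD2014}, which is all the paper offers in place of a written proof. However, there is a genuine gap at the crux of your diffusion step: the claim that $(A_2)$ together with the linear growth of $\sigma$ yields $\sup_{\tau\ge s}\E|u^\rho(\tau,s,\xi,j)|^p\le C_p$ for \emph{every} $p\ge 2$ is false in general. Condition $(A_2)$ is a mean-square (two-point) dissipativity condition; it controls second moments uniformly in time, but not higher ones. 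Already for the scalar linear equation $du=-au\,dt+\sigma u\,dW$ (no switching, no delay), the condition holds with $\beta=2a-\sigma^2$ whenever $2a>\sigma^2$, yet $\E|u(t)|^4=|u_0|^4\exp\bigl(4(-a+\tfrac{3}{2}\sigma^2)t\bigr)$ blows up exponentially whenever $a<\tfrac{3}{2}\sigma^2$; for instance $a=0.6\,\sigma^2$ satisfies the former and violates the latter. Since your BDG bound $\E|M(t_1,t_2)|^p\le C_p''\,|t_2-t_1|^{p/2}$ requires exactly such uniform $p$-th moments of $u^\rho$ (through $|g(r,u^\rho)|^p\le C(1+|u^\rho|)^p$) for some $p>2$ --- and $p=2$ is useless for Kolmogorov's criterion, which needs an exponent strictly larger than $1$ --- this step collapses as written.

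The repair is the standard localization that arguments of this type (including the one in \cite{DDD2014}) use: introduce the stopping time $\tau_R=\inf\{\tau\ge t-2\rho:\ |u^\rho(\tau,s,\xi,j)|>R\}$, where $R=R(\delta_2,\xi,j)$ comes from your two applications of Lemma \ref{Lcb} (plus the deterministic bound $\|\xi\|_\rho$ on the initial segment), so that $P\{\tau_R>t\}>1-\delta_2/2$ uniformly in $s$ and $\rho\le\rho^*$. For the stopped process the diffusion integrand is bounded by the deterministic constant $C(1+R)$, so BDG gives $\E\left|M(t_1\wedge\tau_R,\,t_2\wedge\tau_R)\right|^p\le C_R\,|t_2-t_1|^{p/2}$ for any fixed $p>2$ with $C_R$ independent of $s$; Kolmogorov's criterion then yields the modulus of continuity for the stopped process, and on the event $\{\tau_R>t\}$ the stopped and unstopped paths coincide on $[t-\rho,t]$, so the estimate transfers at the cost of the extra $\delta_2/2$ of probability. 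With that substitution, and your drift estimate kept exactly as is, the proof is correct and uniform in $s$, $t$ and $\rho\le\rho^*$, as the lemma requires.
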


\begin{thm}
Suppose $(A_0)$-$(A_1)$ hold.  There exists a small enough $\rho^*>0$,
if $0<\rho\leq \rho^*$, then    \ref{ced} has a $\rho$-periodic evolution system of measures.
\end{thm}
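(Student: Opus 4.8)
The plan is to combine the two probabilistic estimates furnished by Lemmas \ref{Lcb} and \ref{Lcec} with the periodic Krylov--Bogolyubov machinery of Lemma \ref{Lkbp}. Since the functions $f$ and $g$ built from $h$, $\sigma$ and $A$ satisfy $(A_0)$ and $(A_1)$ (and, as the supporting lemmas use, also $(A_2)$), Eq. \eqref{ced} is a special instance of the abstract system \eqref{eu1}--\eqref{eu2} with delay length $\rho$, so all the results of Section 2 apply verbatim to its segment process $y^\rho(t,s,\xi,j)=(u_t^\rho(s,\xi,j),r_{s,j}(t))$.

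First I would set $\rho^*$ equal to the smaller of the two thresholds appearing in Lemmas \ref{Lcb} and \ref{Lcec} and restrict to $0<\rho\le\rho^*$, which is the only point at which I must check that both estimates can be arranged simultaneously. For such $\rho$, Lemma \ref{Lcb} provides, for each fixed $(\xi,j)\in H$ and each $\delta>0$, a radius $R=R(\delta,\xi,j)$ independent of $s$ with $P\{\|u_t^\rho(s,\xi,j)\|_\rho\le R\}>1-\delta$ for all $t\ge s$; this is exactly hypothesis $(\mathcal A_1)$ of Corollary \ref{Ckbp}. Likewise Lemma \ref{Lcec} supplies the modulus-of-continuity control that is precisely hypothesis $(\mathcal A_2)$, again with the governing parameter $\eta$ independent of $s$. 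Hence both hypotheses of Corollary \ref{Ckbp} hold at a fixed (arbitrary) base point $(\xi,j)\in H$.

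Next, running the argument inside the proof of Corollary \ref{Ckbp}, the set $Y_\delta=Y_{1,\delta}\cap Y_{2,\delta}$ is precompact in $C_\rho$ by the Arzel\`a--Ascoli theorem and carries mass greater than $1-\delta$ uniformly in $s$, so the laws of $\{u_t^\rho(s,\xi,j)\}_{s+\rho\le t}$ form a tight family on $C_\rho$. By the reasoning of Corollary \ref{Ckb}, tightness of these laws forces tightness of the averaged measures $\{\mu_{n,t}\}_{n=1}^\infty$ defined in \eqref{ie1} on $H=C_\rho\times S$, for every $t\in\R$.

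Finally, because $\rho_0(t)$ is $\rho$-periodic, the segment process $y^\rho(t,s,\xi,j)$ is a $\rho$-periodic time-inhomogeneous Markov process, as noted just after \eqref{ced}. Having established tightness of $\{\mu_{n,t}\}$, I would invoke Lemma \ref{Lkbp} with $\varpi=\rho$ to conclude that Eq. \eqref{ced} admits a $\rho$-periodic evolution system of measures. The genuine analytic content sits entirely in Lemmas \ref{Lcb} and \ref{Lcec}, whose $s$-uniform estimates are the crux; granting those, the present theorem is purely an assembly of the preceding results, so the main obstacle is not in this proof but in the (already-cited) derivations of those two uniform bounds.
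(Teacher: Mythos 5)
Your proposal is correct and follows exactly the paper's own route: Lemma \ref{Lcb} and Lemma \ref{Lcec} verify hypotheses $(\mathcal A_1)$ and $(\mathcal A_2)$ of Corollary \ref{Ckbp}, giving tightness, and then Lemma \ref{Lkbp} with $\varpi=\rho$ (using the $\rho$-periodicity of $y^\rho$) yields the $\rho$-periodic evolution system of measures. You have merely expanded the paper's one-line proof, and your parenthetical observation that the supporting lemmas actually require $(A_2)$ (not just $(A_0)$--$(A_1)$ as the theorem states) is a fair catch of a slip in the paper's hypotheses.
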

\begin{proof}
By Lemma \ref{Lcb} and \ref{Lcec}, we get the result  from Corollary \ref{Ckbp} and Lemma \ref{Lkbp} immediately.
\end{proof}

Moreover, the following results gives information on the asymptotic stability in
distribution of the $\rho$-periodic evolution system of measures.

\begin{thm}\label{Lfc}
Suppose $(A_0)$-$(A_2)$ hold. There exists a small enough $\rho^*>0$,
if $0<\rho\leq \rho^*$, then  Eq. \eqref{ced} has a unique $\rho$-periodic
evolution system of measures $\{\mu_t\}_{t\in \R}$, which is asymptotic stability in
distribution, i.e.,
for any  $(\xi,j)\in H$
\be\label{as}
\mathop {\lim }\limits_{t \to  + \infty } d_{\rm{L}}^ * \left( {P_{s,t}^ *
\delta_{\xi ,j} ,\mu_t } \right) = 0,\quad \forall s\in \R.
\ee
\end{thm}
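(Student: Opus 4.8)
The plan is to separate the three assertions---existence, uniqueness, and asymptotic stability in distribution---and to notice that the last two both rest on a single uniform contraction estimate. Existence of a $\rho$-periodic evolution system of measures $\{\mu_t\}_{t\in\R}$ is already supplied by the preceding theorem (Corollary \ref{Ckbp} and Lemma \ref{Lkbp}, via Lemmas \ref{Lcb} and \ref{Lcec}), so nothing new is needed there. For the remaining claims I would first establish the following contraction: for every bounded $B\subseteq C_\rho$ and every $\eta>0$ there is $T=T(\eta,B)>0$, independent of $s$ and of $\rho\in(0,\rho^*]$, such that
\[
d_L^*\bigl(P_{s,t}^*\delta_{\xi_1,j_1},\,P_{s,t}^*\delta_{\xi_2,j_2}\bigr)<\eta\qquad\text{for all }t\ge s+T,\ (\xi_1,\xi_2)\in B\times B,\ j_1,j_2\in S.
\]
Granting this, both uniqueness and stability follow quickly.

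To prove the contraction I would couple two copies of the hybrid process issued from $(\xi_1,j_1)$ and $(\xi_2,j_2)$, driving the continuous components by the \emph{same} Wiener process $W$ and coupling the two chains $r_{s,j_1}$ and $r_{s,j_2}$ so that they coalesce at their first meeting time $\tau$ and move together afterwards. Since $S$ is finite and $r(t)$ is irreducible, $\tau-s$ has exponentially small tails uniformly in $(j_1,j_2)$ and in $s$, so $P(\tau>s+T_1)$ is arbitrarily small for $T_1$ large. On $\{\tau\le s+T_1\}$ the two chains agree after $\tau$; by the strong Markov property the segments then evolve under a common switching signal from the time-$\tau$ data, whose $\|\cdot\|_\rho$-norms are bounded with high probability, uniformly in $s,\rho$, by Lemma \ref{Lcb}. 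Hence Lemma \ref{Lce}, whose contraction is precisely the dissipativity granted by $(A_2)$, applies with these random bounded initial segments and forces $\|u^{(1)}_t-u^{(2)}_t\|_\rho$ below any prescribed threshold with probability close to one once $t-\tau$ is large. Testing against $\varphi\in L_b(H)$ with $\|\varphi\|_L\le1$, splitting the expectation over the coupled/uncoupled events and over the boundedness and contraction events, and using $\text{Lip}(\varphi)\le1$ on the coupled event where the $S$-coordinates coincide, yields the displayed estimate.

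With the contraction in hand, asymptotic stability is obtained by writing $\mu_t=P_{s,t}^*\mu_s$ and estimating, for $\varphi\in L_b(H)$ with $\|\varphi\|_L\le1$,
\[
\Bigl|P_{s,t}\varphi(\xi,j)-\int_H\varphi\,d\mu_t\Bigr|
=\Bigl|\int_H\bigl(P_{s,t}\varphi(\xi,j)-P_{s,t}\varphi(\zeta,k)\bigr)\,\mu_s(d\zeta,k)\Bigr|
\le\int_H d_L^*\bigl(P_{s,t}^*\delta_{\xi,j},P_{s,t}^*\delta_{\zeta,k}\bigr)\,\mu_s(d\zeta,k).
\]
Since $\mu_s$ is tight (uniformly in $s$ by Lemma \ref{Lcb}), outside a bounded set $B\times S$ of $\mu_s$-mass less than $\epsilon$ the integrand, being a $d_L^*$-distance, is at most $2$ and contributes at most $2\epsilon$, while on $B\times S$ the contraction estimate bounds it by $\eta$ once $t\ge s+T$. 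Taking the supremum over such $\varphi$ gives $d_L^*(P_{s,t}^*\delta_{\xi,j},\mu_t)\to0$ as $t\to+\infty$, uniformly in $s$, which is \eqref{as}.

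Uniqueness follows from the same contraction read in the variable $s$: if $\{\mu_t\}$ and $\{\nu_t\}$ are two evolution systems of measures, then for fixed $t$ and any $(\xi,j)$ the triangle inequality gives $d_L^*(\mu_t,\nu_t)\le d_L^*(\mu_t,P_{s,t}^*\delta_{\xi,j})+d_L^*(P_{s,t}^*\delta_{\xi,j},\nu_t)$, and both terms tend to $0$ as $s\to-\infty$ by the argument of the previous paragraph applied to each system (using tightness of $\mu_s$ and $\nu_s$); since the left-hand side is independent of $s$, we conclude $\mu_t=\nu_t$ for all $t$, and in particular the $\rho$-periodic system is the unique one. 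I expect the main obstacle to be the coupling step: Lemma \ref{Lce} is stated only for a common initial switching state $j$, so the substantive work is to construct the chain coupling, justify restarting the continuous dynamics at the random coalescence time $\tau$ with Lemma \ref{Lcb}-bounded data via the strong Markov property, and bookkeep the several small-probability exceptional events so that every error term is controlled uniformly in $s$ and $\rho$.
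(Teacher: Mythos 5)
Your proposal is correct in substance but takes a genuinely different route from the paper: the paper's entire proof is a one-line citation --- it observes that Lemmas \ref{Lcb} and \ref{Lce} supply exactly the hypotheses of Theorem 2.10 in \cite{LLP2022} and invokes that theorem as a black box --- whereas you reconstruct, from those same two lemmas, the argument such a theorem encapsulates: a Yuan--Mao-type coupling of the two switching chains (needed because Lemma \ref{Lce} only treats a common initial state $j$), a contraction estimate for $d_L^*\bigl(P_{s,t}^*\delta_{\xi_1,j_1},P_{s,t}^*\delta_{\xi_2,j_2}\bigr)$ uniform in $s$ and $\rho$, and then stability and uniqueness via the splitting $\mu_t=P_{s,t}^*\mu_s$. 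What the paper's route buys is brevity, outsourcing the delicate bookkeeping (coalescence time of the chains, strong Markov restart with Lemma \ref{Lcb}-bounded data, exceptional events) to the cited result; what your route buys is a self-contained proof that makes explicit where each hypothesis enters, and in particular that the chain coupling is the step Lemma \ref{Lce} alone cannot supply.

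One point you gloss over deserves attention. In the uniqueness step you let $s\to-\infty$ and need the mass of \emph{both} systems $\{\mu_s\}$ and $\{\nu_s\}$ to concentrate on a fixed bounded set uniformly in $s$. For the $\rho$-periodic system produced by the Krylov--Bogolyubov construction this uniformity does follow from Lemma \ref{Lcb} (the constant $R$ there is independent of $s$ and $t$), but for an \emph{arbitrary} competing evolution system $\{\nu_s\}$ it is not automatic: Ulam's theorem gives tightness of each $\nu_s$ separately, with no uniformity as $s\to-\infty$. The standard repair is to use the dissipativity $(A_2)$ to derive a uniform moment bound of the form $\sup_{s\in\R}\int_H\|\zeta\|_\rho^2\,\nu_s(d\zeta,k)<\infty$ valid for any evolution system of measures, which yields the required uniform boundedness in probability; without some such a priori estimate, your argument only proves uniqueness within the class of uniformly tight evolution systems of measures.
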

\begin{proof}
By Lemma \ref{Lcb} and \ref{Lce}, we get \eqref{as}  from Theorem 2.10 in \cite{LLP2022} immediately.
\end{proof}

\begin{rem}
In \cite{LLLM2022}, the authors also investigated the asymptotic stability in
distribution of the solutions of Eq. \eqref{ced}. However,
it is only proved that the discrete time points  is asymptotic stability in
distribution, i.e.,
\be
\mathop {\lim }\limits_{n \to  + \infty } d_{\rm{L}}^ * \left( {P_{0,n \rho}^ *
\delta_{\xi ,j} ,\mu_{n\rho} } \right) = 0.
\ee
Moreover, if the intervals are not equal, the controlled system \eqref{ced}
 is not  periodic. By Theorem 2.10 in \cite{LLP2022}, we can get Eq. \eqref{ced} has a unique
evolution system of measures $\{\mu_t\}_{t\in \R}$, which is asymptotic stability in
distribution.
\end{rem}

It is well known (see, e.g., \cite{YM2003}) that under Conditions $(A_0)$-$(A_2)$,
Eq. \eqref{coe} has a unique invariant measure $\mu^0$, which is asymptotic stability in
distribution.

\begin{thm}\label{cov4}
Suppose $(A_0)$-$(A_2)$ hold and  $\rho_n \to 0$.
If $\{\mu_t^{\rho_n}\}_{t\in \R}$   is  the unique $\rho_n$-periodic
evolution system of measures of problem \eqref{ced} with $\rho$
replaced by $\rho_n$
and   $\mu^0$ is the  unique
invariant  measures of problem \eqref{coe}, then  for each $t\in \R$,
$ \mu^{\rho_{n }}_t \rightarrow \mu^{0}$ weakly.
\end{thm}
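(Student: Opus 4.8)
The plan is to deduce Theorem \ref{cov4} from the abstract convergence result Theorem \ref{Tmc}, combined with the uniqueness and asymptotic stability of the invariant measure $\mu^0$ of the limiting autonomous system \eqref{coe}. I set $X=\R^n$, identify $\mathcal Z^{\rho_n}$ with the process $y^{\rho_n}(t,s,\xi,j)$ associated with \eqref{ced} (with $\rho$ replaced by $\rho_n$) and $\mathcal Z^0$ with $(u^0(t),r(t))$ from \eqref{coe}, so that $\mathcal H_{\rho_n}=C([-\rho_n,0],\R^n)\times S$ and $\mathcal H_0=\R^n\times S$. The coefficients of \eqref{coe} depend on $t$ only through $r(t)$, so $\mathcal Z^0$ is time homogeneous and the constant family $\mu_t\equiv\mu^0$ is an evolution system of measures of $\mathcal Z^0$. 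Since convergence of $\mu^{\rho_n}_t$ is understood through the endpoint projection $T_{\rho_n}(\psi,j)=(\psi(0),j)$, the target is that $\mu^{\rho_n}_t\circ T_{\rho_n}^{-1}\to\mu^0$ weakly on $\mathcal H_0$ for each $t\in\R$.

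First I would check the hypotheses of Theorem \ref{Tmc}. Condition \eqref{h} is exactly Lemma \ref{Lcr}. The uniform tightness \eqref{Tmpc 1} is built from Lemmas \ref{Lcb} and \ref{Lcec}: because the radius $R=R(\eta,\xi,j)$ and the modulus of continuity furnished there are independent of both $s$ and $\rho\in(0,\rho^*]$, I fix a single set $K_1\subseteq C([-1,0],\R^n)$ of functions bounded by $R$ with that common modulus; by Arzel\`a--Ascoli $K_1$ is compact, and its restriction $\mathcal T_{\rho_n}K_1$ contains every function on $[-\rho_n,0]$ obeying the same bound and modulus (extend it constantly to $[-1,0]$). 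Each $\mu^{\rho_n}_t$ is a weak limit of time averages of the laws of $u^{\rho_n}_t(\tau,\xi,j)$, so the estimates of Lemmas \ref{Lcb} and \ref{Lcec}, being uniform in $\tau=s$ and in $\rho_n$, pass to the averages and, via the portmanteau inequality on the closed set $\mathcal T_{\rho_n}K_1$, to $\mu^{\rho_n}_t$, giving $\mu^{\rho_n}_t(\mathcal T_{\rho_n}K_1)>1-\epsilon$ for all $n$ and uniformly in $t$. This is \eqref{Tmpc 1}, so Theorem \ref{Tmc}(i) yields that $\{\mu^{\rho_n}_t\circ T_{\rho_n}^{-1}\}_n$ is tight on $\mathcal H_0$, uniformly in $t$.

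Next I would argue by subsequences. Fix $t$; it suffices to show every subsequence of $\{\mu^{\rho_n}_t\circ T_{\rho_n}^{-1}\}$ has a further subsequence converging weakly to $\mu^0$. Given a subsequence, by the uniform tightness and a diagonal extraction over a countable dense, unbounded-below set $\mathbb D\ni t$, I pass to a further subsequence along which $\mu^{\rho_{n_k}}_s\circ T_{\rho_{n_k}}^{-1}\to\mu_s$ weakly for every $s\in\mathbb D$. Applying Theorem \ref{Tmc}(ii) to each pair $s<t$ in $\mathbb D$ shows that $\{\mu_s\}_{s\in\mathbb D}$ satisfies $P_{s,t}^*\mu_s=\mu_t$ for the time-homogeneous process $\mathcal Z^0$. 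To identify $\mu_t$ with $\mu^0$, I write $(\varphi,\mu_t)=\int_{\mathcal H_0}P_{s,t}\varphi(x)\,\mu_s(dx)$ for $\varphi\in C_b(\mathcal H_0)$, so that $|(\varphi,\mu_t)-(\varphi,\mu^0)|\le\int_{\mathcal H_0}|P_{s,t}\varphi(x)-(\varphi,\mu^0)|\,\mu_s(dx)$; splitting the integral over a compact set carrying all but $\epsilon$ of the mass of every $\mu_s$ (from the uniform tightness of $\{\mu_s\}$) and its complement, and letting $s\to-\infty$ along $\mathbb D$, the asymptotic stability in distribution of $\mu^0$ forces the right-hand side to $0$. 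As $\mu_t$ is independent of $s$, this gives $\mu_t=\mu^0$, and since the subsequence was arbitrary the whole sequence converges, i.e. $\mu^{\rho_n}_t\to\mu^0$ weakly for each $t$.

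I expect the main obstacle to be this last limit passage: the convergence $P_{s,t}\varphi(x)\to(\varphi,\mu^0)$ must be made uniform over the varying measures $\mu_s$, which requires combining the uniform-in-$s$ tightness of the limit family with the uniform-over-bounded-sets form of asymptotic stability (in the spirit of Lemma \ref{Lce}) rather than its merely pointwise statement. The other delicate point is the uniform-in-$t$ verification of \eqref{Tmpc 1}, namely transferring the $\rho$-independent bounds of Lemmas \ref{Lcb} and \ref{Lcec} through the restriction operator $\mathcal T_{\rho_n}$ into a single compact subset of $C([-1,0],\R^n)$.
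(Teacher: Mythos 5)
Your proposal is correct and follows the same overall route as the paper's proof: verify hypothesis \eqref{h} of Theorem \ref{Tmc} via Lemma \ref{Lcr}, build the uniform tightness \eqref{Tmpc 1} from the $\rho$- and $s$-independent estimates of Lemmas \ref{Lcb} and \ref{Lcec} through an Arzel\`a--Ascoli compact set (this is exactly the paper's appeal to ``the arguments of Corollary \ref{Ckbp}''), extract a weakly convergent subsequence of $\mu^{\rho_n}_t\circ T_{\rho_n}^{-1}$, invoke Theorem \ref{Tmc}(ii) to see the limit family is an evolution system of measures of \eqref{coe}, and conclude by uniqueness. Where you differ is in the endgame, and your version is the more complete one. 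The paper simply asserts that the limit family ``is the unique evolution system of measures of \eqref{coe}, which coincides with the unique invariant measure''; but the cited result (\cite{YM2003}) gives uniqueness of the \emph{invariant measure} of \eqref{coe}, and in general a time-homogeneous process can admit non-constant evolution systems of measures, so this identification requires an argument. You supply it: using asymptotic stability in distribution (in its form uniform over tight sets, in the spirit of Lemma \ref{Lce}) and letting $s\to-\infty$, any evolution system of measures with uniformly tight marginals must be constant and equal to $\mu^0$. You also make explicit the diagonal extraction over a countable set of times unbounded below, without which Theorem \ref{Tmc}(ii) cannot be applied to a coherent limiting \emph{family} $\{\mu_s\}$ -- a point the paper glosses over by extracting a subsequence at a single fixed $t$ and then speaking of $\{\mu^*_t\}_{t\in\R}$. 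In short: same skeleton, but your proposal closes the two gaps in the paper's two-paragraph proof, at the cost of the extra uniform-stability bookkeeping you correctly flag as the delicate step.
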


\begin{proof}

(i).
Since all uniform estimates
given in Lemma \ref{Lcb} and Lemma \ref{Lcec}
are uniform with respect to $\rho_n\in (0,1]$,
by  the arguments
 of Corollary \ref{Ckbp},
one can easily check    that for each $t\in \R$
 the set
 $\bigcup\limits_{\rho \in (0,1]}
  \mu^{\rho_n }_t $ is tight in the sense defined in Theorem \ref{Tmc}.

(ii).   By (i)
we know that $\{\mu^{\rho_n}_t\}$ is tight,
and hence by Theorem \ref{Tmc} and Lemma \ref{Lcr} we infer that
 the sequence
 $\{ \mu^{\rho_n }_t\circ T_{\rho_n}^{-1} \}_{n=1}^\infty$
 is also tight on $H$.
 Consequently,   there exists a subsequence $\rho_{n_k}$
and a probability measure $\mu^{*}_t$ such that
 $\mu^{\rho_{n_k}}_t\circ T_{\rho_{n_k} }^{-1}  \rightarrow \mu^{*}_t $
weakly.
By  Theorem \ref{Tmc} and Lemma \ref{Lcr} again, we find
  that $\{\mu^{*}_t\}_{t\in \R} $
is    the unique evolution system of measures of \eqref{coe}, which
coincides with the unique invariant measure.
\end{proof}

\end{document}